\newcolumntype{M}[1]{>{\centering\arraybackslash}m{#1}}
\newtheorem{definition}{Definition}
\newtheorem{lemma}{Lemma}
\newtheorem{theorem}{Theorem}
\newtheorem{corollary}{Corollary}
\numberwithin{equation}{section}
\numberwithin{definition}{section}
\numberwithin{lemma}{section}
\numberwithin{theorem}{section}
\numberwithin{corollary}{section}
\numberwithin{example}{section}
\title{Sufficiency for Nephroid Starlikeness using Hypergeometric Functions}
\author{A. Swaminathan$^\ddagger$}
   \address{$^\ddagger$Department of Mathematics\\ Indian Institute of Technology, Roorkee-247667, Uttarakhand, India}
   \email{mathswami@gmail.com, a.swaminathan@ma.iitr.ac.in}
\author{Lateef Ahmad Wani$^\dagger$}
\address{$^\dagger$Department of Mathematics\\ Indian Institute of Technology, Roorkee-247667, Uttarakhand, India}
\email{lateef17304@gmail.com}
\begin{document}
	
\begin{abstract}
Let $\mathcal{A}$ consists of analytic functions $f:\mathbb{D}\to\mathbb{C}$ satisfying $f(0)=f'(0)-1=0$. Let $\mathcal{S}^*_{Ne}$ be the recently introduced Ma-Minda type functions family associated with the $2$-cusped kidney-shaped {\it nephroid} curve $\left((u-1)^2+v^2-\frac{4}{9}\right)^3-\frac{4 v^2}{3}=0$ given by
\begin{align*}
	\mathcal{S}^*_{Ne}:=
	\left\{f\in\mathcal{A}:\frac{zf'(z)}{f(z)}\prec\varphi_{\scriptscriptstyle {Ne}}(z)=1+z-z^3/3\right\}.
\end{align*}
In this paper, we adopt a novel technique that uses the geometric properties of {\it hypergeometric functions} to determine sharp estimates on $\beta$ so that each of the differential subordinations 
\begin{align*}
	p(z)+\beta zp'(z)\prec
	\begin{cases}
		\sqrt{1+z};\\
		1+z;\\
		e^z;
	\end{cases}
\end{align*}
imply $p(z)\prec\varphi_{\scriptscriptstyle{Ne}}(z)$, where $p(z)$ is analytic satisfying $p(0)=1$. As applications, we establish conditions that are sufficient to deduce that $f\in\mathcal{A}$ is a member of $\mathcal{S}^*_{Ne}$.  	
\end{abstract}
\subjclass[2010] {30C45, 30C80, 33C05, 33C15}
\keywords{Differential Subordination, Starlike functions, Hypergeometric Functions, Nephroid, Bernoulli Lemniscate}
\maketitle
\markboth{A. Swaminathan and Lateef Ahmad Wani}{Sufficiency for nephroid starlikeness using hypergeometric functions}


\section{Introduction}
Let $\mathcal{A}$ be the family of analytic functions $f$ defined on the open unit disk $\mathbb{D}:=\left\{z:|z|<1\right\}$ and satisfying $f(0)=f'(0)-1=0$. Let $\mathcal{S}\subset\mathcal{A}$ be the family of one-one ({\it univalent}) functions defined on $\mathbb{D}$. Further, let $\mathcal{S}^*\subset\mathcal{S}$ and $\mathcal{C}\subset\mathcal{S}$ be, respectively, the well-known classes of {\it starlike} and {\it convex} functions defined on $\mathbb{D}$. We note that the functions in $\mathcal{S}^*$ are analytically characterized by the condition that for each $z\in\mathbb{D}$, the quantity $zf'(z)/f(z)$ lies in the interior of the half-plane $\mathrm{Re}(w)>0$.
\par 
Let $f$ be analytic and $g$ be univalent. Then $f$ is {\it subordinate} to $g$, written as $f\prec{g}$, if, and only if, 
\begin{align*}
	f(0)=g(0) \quad \text{and} \quad f(\mathbb{D})\subset g(\mathbb{D}).
\end{align*}
\begin{definition}
	Let $\Lambda:\mathbb{C}^2\times\mathbb{D}\to\mathbb{C}$ be analytic, and let $u$ be univalent. The analytic function $p$ is said to satisfy the first-order differential subordination if
	\begin{align}\label{Def-Diff-Subord-Psi}
		\Lambda(p(z),\,zp'(z);\,z)\prec u(z), \qquad z\in\mathbb{D}.
	\end{align}
\end{definition}
If $q:\mathbb{D}\to\mathbb{C}$ is univalent and $p{\prec}q$ for all $p$ satisfying \eqref{Def-Diff-Subord-Psi}, then $q$ is said to be a dominant of the differential subordination \eqref{Def-Diff-Subord-Psi}. A dominant $\tilde{q}$ that satisfies $\tilde{q}\prec{q}$ for all dominants $q$ of \eqref{Def-Diff-Subord-Psi} is called the best dominant of \eqref{Def-Diff-Subord-Psi}. If $\tilde{q}_1$ and $\tilde{q}_2$ are two best dominants of \eqref{Def-Diff-Subord-Psi}, then $\tilde{q}_2(z)=\tilde{q}_1(e^{i\theta}z)$ for some $\theta\in\mathbb{R}$.
For further details related to differential subordinations, we refer to the monograph of Miller and Mocanu \cite{Miller-Mocanu-Book-2000-Diff-Sub} (see also Bulboac\v{a} \cite{Bulboaca-2005-Diff-Sub-Book}).  
Due to its straightforward consequences, the theory of differential subordinations (a complex analogue of differential inequalities) developed by Miller and Mocanu \cite{Miller-Mocanu-Book-2000-Diff-Sub} is being extensively used in studying the analytic and geometric properties of univalent functions.
For some recent works, see 
\cite{Antonio-Miller-2020-AMP,
	Ebadian-Bulboaca-Cho-RACSAM-2020,
	Ebadian-Adegani-Bulboaca-2020-JFS,
	Gavris-2020-Slovaca,
	S.Kumar-Goel-2020-RACSAM,
	Naz-Ravi-2020-MJM,
   Swami-Wani-2020-BKMS, 
HMS-DS-Caratheo-2020-JIA}. 
\par 
\bigskip
Following  
\cite{Ali-Jain-Ravi-2012-Radii-LemB-AMC, 
	Sokol-J.Stankwz-1996-Lem-of-Ber,
	Mendiratta-2014-Shifted-Lemn-Bernoulli,
	Gandhi-Ravi-2017-Lune,
	Sharma-Raina-Sokol-2019-Ma-Minda-Crescent-Shaped,
    Mendiratta-Ravi-2015-Expo-BMMS,
    Sharma-Ravi-2016-Cardioid,
    Kumar-Ravi-2016-Starlike-Associated-Rational-Function,
    Kargar-2019-Booth-Lem-A.M.Physics,
    Cho-2019-Sine-BIMS,
    Khatter-Ravi-2019-Lem-Exp-Alpha-RACSAM,
    Goel-Siva-2019-Sigmoid-BMMS, 
    Yunus-2018-Limacon},
the authors in \cite{Wani-Swami-Nephroid-Basic,Wani-Swami-Radius-Problems-Nephroid-RACSAM} introduced and studied the geometric properties of the function $\varphi_{\scriptscriptstyle{Ne}}(z):=1+z-z^3/3$ and the associated Ma-Minda type 
(see \cite{Ma-Minda-1992-A-unified-treatment, 
	HMS-MM-2018-RACSAM, 
	HMS-MM-2013-JCA})
 function family $\mathcal{S}^*_{Ne}$ given by
\begin{align*}
	\mathcal{S}^*_{Ne}:=\left\{f\in\mathcal{A}:\frac{zf'(z)}{f(z)}\prec\varphi_{\scriptscriptstyle {Ne}}(z)\right\}.
\end{align*}
It was proved by Wani and Swaminathan \cite{Wani-Swami-Nephroid-Basic} that the function $\varphi_{\scriptscriptstyle{Ne}}(z)$ maps the boundary  $\partial\mathbb{D}$ of the unit disk $\mathbb{D}$ univalently onto the {\it nephroid}, a $2$--cusped kidney--shaped curve (see \Cref{Figure-Nephroid}), given by
\begin{align}\label{Equation-of-Nephroid}
	\left((u-1)^2+v^2-\frac{4}{9}\right)^3-\frac{4 v^2}{3}=0.
\end{align}
Geometrically, a nephroid is the locus of a point fixed on the circumference of a circle of radius $\rho$ that rolls (without slipping) on the outside of a fixed circle having radius $2\rho$.
First studied by Huygens and Tschirnhausen in 1697, the nephroid curve was shown to be the catacaustic (envelope of rays emanating from a specified point) of a circle when the light source is at infinity. In 1692, Jakob Bernoulli had shown that the nephroid is the catacaustic of a cardioid for a luminous cusp. However, the word nephroid was first used by Richard A. Proctor in 1878 in his book `{\it The Geometry of Cycloids}'. For further details related to the nephroid curve, we refer to \cite{Lockwood-Book-of-Curves-2007, Yates-1947-Handbook-Curves}.
\begin{figure}[H]
	\includegraphics[scale=1]{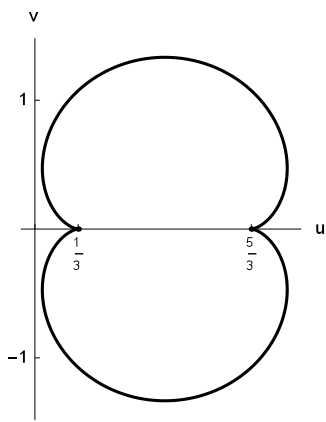}
	\caption{Nephroid: The Boundary curve of
		$\varphi_{\scriptscriptstyle{Ne}}(\mathbb{D})$.}
	\label{Figure-Nephroid}	
\end{figure}
Thus $f\in\mathcal{S}^*_{Ne}$ if, and only if, all the values taken by the expression ${zf'(z)}/{f(z)}$ lie in the region $\Omega_{Ne}$ bounded by the nephroid curve \eqref{Equation-of-Nephroid}. Since $\mathcal{S}^*_{Ne}\subset\mathcal{S}^*$, we call $f\in\mathcal{S}^*_{Ne}$ a {\it nephroid starlike} function.
\par
\bigskip
In this paper, we employ the differential subordination techniques and use the geometric properties of {\it Gaussian and confluent hypergeometric functions} to establish conditions which ensure that the analytic function $f\in\mathcal{A}$ is nephroid starlike in $\mathbb{D}$. More specifically, we determine the best possible bounds on the real $\beta$ so that, for some analytic $p$ satisfying $p(0)=1$, the following implication holds:
\begin{align*}
	p(z)+\beta zp'(z)\prec
	\begin{cases}
		\sqrt{1+z};\\
		1+z;\\
		e^z;
	\end{cases}
\implies
p(z)\prec\varphi_{\scriptscriptstyle{Ne}}(z).
\end{align*}
Replacing $p(z)$ by the expression ${zf'(z)}/{f(z)}$ for any $f\in\mathcal{A}$, we obtain conditions that are sufficient to imply that the function $f$ is nephroid starlike in $\mathbb{D}$.
\par 
Although similar type of differential subordination implication problems have been studied  
for several other function families  
(for instance see 
\cite{Ahuja-Ravi-2018-App-Diff-Sub-Stud-Babe-Bolyai,
	Ali-Ravi-2012-Diff-Sub-LoB-TaiwanJM,
	Aktas-LemExpo-Coulomb-2020-SSMH,
	Kumar-Ravi-2013-Suff-Conditions-LoB-JIA, 
	SushilKumar-Ravi-2018-Sub-Positive-RP-CAOT, 
	Bohra-Ravi-2019-Diff-Subord-Hacet.J, 
	Cho-Ravi-2018-Diff-Sub-Booth-Lem-TJM, 
	Ebadian-Bulboaca-Cho-RACSAM-2020, 
	Madaan-Ravi-2019-Filomat,
	NazAdiba-Ravi-2019-Exponential-TJM, 
	Naz-Ravi-2020-MJM,
	S.Kumar-Goel-2020-RACSAM}), the approach of utilizing the properties of hypergeometric functions to arrive at the desired implication is totally new. In addition, this paper verifies {\it analytically} certain crucial facts which some of the above cited authors have concluded geometrically without providing any analytic clarification. However, graphical illustrations are also provided in this manuscript for enhancing the clarity of the results to the reader and competing with the existing related literature.
	\par 
 In the sequel, it is always assumed that $z\in\mathbb{D}$ unless stated otherwise.
\section{Preliminaries on Hypergeometric Functions}
The following lemma related to differential subordination will be used in our discussion.
\begin{lemma}[{Ma and Minda \cite[p. 132]{Miller-Mocanu-Book-2000-Diff-Sub}}]\label{Lemma-3.4h-p132-Miller-Mocanu}
	Let $q:\mathbb{D}\to\mathbb{C}$ be univalent, and let $\lambda$ and $\vartheta$ be analytic in a domain $\Omega\supseteq q(\mathbb{D})$ with $\lambda(\xi)\neq0$ whenever $\xi\in{q(\mathbb{D})}$. Define
	\begin{align*}
		\Theta(z):=zq'(z)\,\lambda(q(z)) \quad \text{ and } \quad  h(z):=\vartheta(q(z))+\Theta(z), \qquad z\in\mathbb{D}.
	\end{align*}
	Suppose that either
	\begin{enumerate}[\rm(i)]
		\item $h(z)$ is convex, or
		\item $\Theta(z)$ is starlike.\\
		In addition, assume that
		\item $\mathrm{Re}\left({zh'(z)}/{\Theta(z)}\right)>0$ in $\mathbb{D}$.
	\end{enumerate}
	If $p\in\mathcal{H}$ with $p(0)=q(0)$, $p(\mathbb{D})\subset{\Omega}$ and
	\begin{align*}
		\vartheta(p(z))+zp'(z)\,\lambda(p(z))\prec\vartheta(q(z))+zq'(z)\,\lambda(q(z)), \qquad z\in\mathbb{D},
	\end{align*}
	then $p\prec{q}$, and $q$ is the best dominant.
\end{lemma}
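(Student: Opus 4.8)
The plan is to read this off as the standard best-dominant criterion for first-order differential subordinations and to prove it by verifying an admissibility condition. Put $\psi(r,s):=\vartheta(r)+s\,\lambda(r)$, so that the hypothesis reads $\psi\bigl(p(z),zp'(z)\bigr)\prec h(z)$ while, by definition of $h$, the identity $h(z)=\psi\bigl(q(z),zq'(z)\bigr)$ holds. By the general subordination machinery of Miller and Mocanu \cite{Miller-Mocanu-Book-2000-Diff-Sub} it suffices to show that $\psi$ is \emph{admissible relative to $q$}, meaning that
\begin{equation*}
	\psi\bigl(q(\zeta),\,m\,\zeta q'(\zeta)\bigr)\notin h(\mathbb{D})\qquad\text{for every }\zeta\in\partial\mathbb{D}\ \text{and every }m\ge 1;
\end{equation*}
once this is known, the conclusion $p\prec q$ follows by the familiar contradiction argument. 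Indeed, if $p\not\prec q$ then, since $p(0)=q(0)$ and $q$ is univalent, the boundary lemma of Miller and Mocanu (the subordination analogue of Jack's lemma) produces $z_0\in\mathbb{D}$, $\zeta_0\in\partial\mathbb{D}$ and a real number $m\ge1$ with $p(z_0)=q(\zeta_0)$ and $z_0p'(z_0)=m\,\zeta_0q'(\zeta_0)$, whence $\psi\bigl(p(z_0),z_0p'(z_0)\bigr)=\psi\bigl(q(\zeta_0),m\zeta_0q'(\zeta_0)\bigr)\in h(\mathbb{D})$ contradicts admissibility. Throughout one works with the dilations $p(\rho z)$ and $q(\rho z)$ and lets $\rho\to1^-$, so that all boundary quantities are defined; I would suppress this routine step.

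It remains to establish the admissibility inequality, and this is exactly where hypotheses (i)/(ii) together with (iii) are needed. Using $\Theta(\zeta)=\zeta q'(\zeta)\lambda(q(\zeta))$ one computes
\begin{equation*}
	\psi\bigl(q(\zeta),m\zeta q'(\zeta)\bigr)=\vartheta(q(\zeta))+m\,\zeta q'(\zeta)\lambda(q(\zeta))=\vartheta(q(\zeta))+m\,\Theta(\zeta)=h(\zeta)+(m-1)\,\Theta(\zeta),
\end{equation*}
so the inequality amounts to saying that the half-line $\{\,h(\zeta)+t\,\Theta(\zeta):t\ge0\,\}$ never meets $h(\mathbb{D})$. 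First note that $h$ is univalent: under (i) because a convex function is univalent, and under (ii) because $\Theta$ starlike together with $\mathrm{Re}\bigl(zh'(z)/\Theta(z)\bigr)>0$ makes $h$ close-to-convex and hence univalent; consequently $h(\zeta)\in\partial h(\mathbb{D})$, which disposes of $t=0$. For $t>0$ the decisive observation is that the outward normal to $\partial h(\mathbb{D})$ at the point $h(\zeta)$ is a positive multiple of $\zeta h'(\zeta)$, so that the hypothesis $\mathrm{Re}\bigl(\zeta h'(\zeta)/\Theta(\zeta)\bigr)>0$ says precisely that $\Theta(\zeta)$ has a strictly positive component along that outward normal, i.e.\ $\Theta(\zeta)$ points transversally out of $h(\mathbb{D})$ at $h(\zeta)$.

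In case (i) this concludes the matter at once: $h(\mathbb{D})$ is convex, so the tangent line at $h(\zeta)$ is a supporting line and $h(\mathbb{D})$ lies entirely in the open half-plane on its inner side; since $\Theta(\zeta)$ has a positive outward-normal component, every point $h(\zeta)+t\Theta(\zeta)$ with $t>0$ lies in the complementary open half-plane, hence outside $h(\mathbb{D})$. In case (ii) I would invoke the structure of close-to-convex domains: the complement of such a domain is a union of half-lines with pairwise disjoint interiors, and the half-line issuing from the boundary point $h(\zeta)$ is exactly the one in the direction $\Theta(\zeta)$; thus again $h(\zeta)+t\Theta(\zeta)\notin h(\mathbb{D})$ for $t>0$. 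This proves admissibility, and with it $p\prec q$. Finally $q$ is the best dominant: $q$ itself satisfies $q(0)=q(0)$, $q(\mathbb{D})\subset\Omega$ and $\vartheta(q(z))+zq'(z)\lambda(q(z))=h(z)\prec h(z)$, so $q$ is an eligible choice of $p$, whence $q\prec q_1$ for every dominant $q_1$. The main obstacle is case (ii) of the last step, namely extracting rigorously from $\mathrm{Re}(zh'/\Theta)>0$ and the starlikeness of $\Theta$ the precise geometric fact that the complementary ray at each boundary point has direction $\Theta(\zeta)$; this rests on Kaplan's characterization of close-to-convexity together with a careful limiting argument through the dilations $h(\rho z)$ and $\Theta(\rho z)$.
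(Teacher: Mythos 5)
The paper itself offers no proof of this statement: it is quoted verbatim as a known result from Miller and Mocanu's monograph (p.\ 132), so there is no in-paper argument to compare yours against. Your reconstruction follows the standard route for that theorem --- reduce to an admissibility condition, invoke the Miller--Mocanu boundary lemma to produce $z_0$, $\zeta_0$, $m\ge 1$ with $p(z_0)=q(\zeta_0)$ and $z_0p'(z_0)=m\,\zeta_0 q'(\zeta_0)$, and observe that the offending value equals $h(\zeta_0)+(m-1)\Theta(\zeta_0)$. The dilation device, the identification of the outward normal direction with $\zeta h'(\zeta)$, the supporting-line argument in case (i), and the best-dominant argument (that $q$ itself is an eligible choice of $p$) are all sound.

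The genuine gap is exactly where you locate it, and it is not a removable technicality: in case (ii) you pass from close-to-convexity of $h$ to the decomposition of $\mathbb{C}\setminus h(\mathbb{D})$ into non-crossing half-lines and then assert that the half-line emanating from $h(\zeta)$ points in the direction $\Theta(\zeta)$. The Lewandowski--Biernacki characterization is only an existence statement about \emph{some} family of rays; neither it nor Kaplan's criterion identifies their directions, so the assertion does not follow from what you have established. The fact you need --- that $\{h(\zeta)+t\,\Theta(\zeta):t\ge 0\}$ is disjoint from $h(\mathbb{D})$ for every $\zeta\in\partial\mathbb{D}$ --- is precisely the auxiliary lemma on which Miller and Mocanu's proof of this theorem rests, and it requires its own argument. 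One workable route: for each $t\ge 0$ set $h_t:=h+t\,\Theta$ and note that
\begin{align*}
\mathrm{Re}\left(\frac{zh_t'(z)}{\Theta(z)}\right)=\mathrm{Re}\left(\frac{zh'(z)}{\Theta(z)}\right)+t\,\mathrm{Re}\left(\frac{z\Theta'(z)}{\Theta(z)}\right)>0
\end{align*}
by (iii) and the starlikeness of $\Theta$, so each $h_t$ is close-to-convex, hence univalent, and $h_t(\zeta)\in\partial h_t(\mathbb{D})$; one then shows that the Jordan domains $h_t(\mathbb{D})$ increase with $t$ (a continuity or argument-principle step), whence $h_t(\zeta)\notin h_t(\mathbb{D})\supseteq h_0(\mathbb{D})=h(\mathbb{D})$. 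Without some such argument the crux of case (ii) --- the case actually invoked throughout this paper --- remains unproved.
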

\begin{definition}[{\bf Gaussian hypergeometric function}]
	Let $a,b\in\mathbb{C}$ and $c\in\mathbb{C}\setminus\{0,-1,-2,\ldots\}$. Define
	\begin{align}\label{Gaussian-HG}
		F(a,b;c;z):={_2F_1}(a,b;c;z)=\sum_{j=0}^\infty\frac{(a)_j(b)_j}{j!\;(c)_j}\,z^j, \quad z\in\mathbb{D},
	\end{align}
	where $(x)_j$ is the Pochhammer symbol given by
	\begin{align}\label{Pochhammer-Symbol}
		(x)_j=
		\begin{cases}
			1, \quad j=0\\
			x(x+1)(x+2)\cdots(x+j-1), \qquad j\in\{1,2,\ldots\}.
		\end{cases}
	\end{align}
	The analytic function $F(a,b;c;z)$ given in \eqref{Gaussian-HG} is called the  Gaussian hypergeometric function.
\end{definition}
Prior to the use of hypergeometric functions in the proof of Bieberbach's conjecture by de Branges \cite{de-Branges-1985-Proof-BC}, there has been little known connections between the univalent function theory and the theory of special functions. This surprising use of hypergeometric functions has given function theorists a renewed interest to study the interrelatedness of these two concepts and, as a result, a number of papers have been published in this direction. For instance, see
\cite{Ahuja-2008-Connections-HGFs-AMC, 
	Bohra-Ravi-CHGfs-Bessel-2017-AM,
	Kustner-2002-HGF-CMFT, 
	Kustner-2007-HGF-JMAA, 	
	Miller-Mocanu-1990-HGFs-PAMS,
	Mostafa-2010-HGFs-CMA,
	Ruscheweyh-Singh-1986-HGFs-JMAA,
	Swaminathan-2004-HGFs-TamsuiOxf,  
	Swaminathan-2006-HGFs-Conic-Regions, 
	Swaminathan-2006-Inclusion-HGFs-JCAA, 
	Swaminathan-2007-IncBeta-ITSF,
	Swaminathan-2010-HGFs-CMA, 
    Wani-Swami-2020-BabesBolyai,
HMS-HGF-2017-Math-Methods-Appl-Sci,
HMS-HGF-2019-RACSAM,   
HMS-HGF-2020-JNCA,
HMS-HGF-2021-Quaest}. 
The function $F(a,b;c;z)$ defined in \eqref{Gaussian-HG} has many interesting properties among which the following will be used to prove our results. For further details, we refer to Rainville \cite{Rainville-Special-Functions-Book}.  
\begin{enumerate}[(i)]
	\item $F(a,b;c;z)$ is a solution of the differential equation
	\begin{align*}
		z(1-z)w''(z)+(c-(a+b+1)z)w'(z)-abw(z)=0.
	\end{align*}	
	\item $F(a,b;c;z)$ has a representation in terms of the gamma function
	\begin{align*}
		\Gamma(z)=\int_0^\infty t^{z-1}e^{-t}dt,\quad \mathrm{Re}(z)>0
	\end{align*} 
	as
	\begin{align}\label{Gamma-Function-Rep-Gaussian-HGF}
		F(a,b;c;z)=\frac{\Gamma(c)}{\Gamma(a)\Gamma(b)}\sum_{j=0}^\infty\frac{\Gamma(a+j)\Gamma(b+j)}{j!\;\Gamma(c+j)}\,z^j.
	\end{align}
	\item $F(a,b;c;z)$ satisfies
	\begin{align}\label{Derivative-Property-GHGF}
		F'(a,b;c;z)=\frac{ab}{c}F(a+1,b+1;c+1;z)
	\end{align}
	\item If $\mathrm{Re}\,c>\mathrm{Re}\,b>0$, then $F(a,b;c;z)$ has the following integral representation
	\begin{align}\label{Integral-Rep-Gaussian-HGF}
		F(a,b;c;z)=\frac{\Gamma(c)}{\Gamma(b)\Gamma(c-b)}\int_{0}^1\frac{t^{b-1}(1-t)^{c-b-1}}{(1-tz)^{a}}\,dt, \quad z\in\mathbb{D}.
	\end{align}
\end{enumerate}
We hereby mention that the function $zF(a,b;c;z)$ given by
\begin{align*}
	zF(a,b;c;z)=z{_2F_1}(a,b;c;z)=z+\sum_{j=2}^\infty\frac{(a)_{j-1}(b)_{j-1}}{(j-1)!\;(c)_{j-1}}\,z^j, \quad z\in\mathbb{D},
\end{align*}
is known as {\it normalized} or {\it shifted} Gaussian hypergeometric function.
\subsection*{Order of Starlikeness}
Let $f\in\mathcal{A}$. The order of starlikeness (with respect to zero) of the function $f(z)$ is defined to be the number $\sigma(f)$ given by
\begin{align}\label{Order-ST}
	\sigma(f):=
	\inf_{z\in\mathbb{D}}\mathrm{Re}\left(\frac{zf'(z)}{f(z)}\right)\in[-\infty,1].
\end{align}
In terms of $\sigma(f)$, we observe that $f\in\mathcal{A}$ is starlike if, and ony if, $\sigma(f)\geq0$, or precisely,
\begin{align*}
	f\in\mathcal{S}^* \iff  \sigma(f) \geq 0.
\end{align*}
Related to the order of starlikeness of the modified Gaussian hypergeometric function $zF(a,b;c;z)$, K\"{u}stner
\cite{Kustner-2002-HGF-CMFT, Kustner-2007-HGF-JMAA} 
proved the following result.
\begin{lemma}[{K\"{u}stner \cite[Theorem 1 (a)]{Kustner-2007-HGF-JMAA}}]\label{Lemma-Kustner-2007-HGF-JMAA}
	If $0<a\leq b\leq c$, then 
	\begin{align*}
		1-\frac{ab}{b+c} 
		\leq \sigma\left(zF(a,b;c;z)\right)
		\leq 1-\frac{ab}{2c}.
	\end{align*}	
\end{lemma}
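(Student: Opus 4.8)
The plan is to set $f(z)=zF(a,b;c;z)$ and study $p(z):=zf'(z)/f(z)$. A short computation together with \eqref{Derivative-Property-GHGF} gives
\[
p(z)=1+\frac{zF'(a,b;c;z)}{F(a,b;c;z)}=1+\frac{ab}{c}\,z\,\frac{F(a+1,b+1;c+1;z)}{F(a,b;c;z)},
\]
so $p(0)=1$ and $p'(0)=ab/c>0$. By the definition \eqref{Order-ST} the Lemma amounts to two one-sided estimates: the \emph{upper} bound $\sigma(f)\le 1-\tfrac{ab}{2c}$, and the \emph{lower} bound $\mathrm{Re}\,p(z)\ge 1-\tfrac{ab}{b+c}$ for every $z\in\mathbb{D}$.

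The upper bound is the soft one. If $\sigma(f)=-\infty$ there is nothing to prove; otherwise put $\gamma:=\sigma(f)$, and note $\gamma<1$: since $\mathrm{Re}\,p$ is harmonic and non-constant (because $p'(0)=ab/c\ne0$) with $\mathrm{Re}\,p(0)=1$, the minimum principle excludes $\gamma=1$. Fix $\gamma'<\gamma$; then $q:=(p-\gamma')/(1-\gamma')$ is analytic in $\mathbb{D}$ with $q(0)=1$ and $\mathrm{Re}\,q>0$, so Carath\'{e}odory's coefficient bound gives $|q'(0)|\le2$, i.e. $\dfrac{ab/c}{1-\gamma'}\le 2$, whence $\gamma'\le 1-\tfrac{ab}{2c}$; letting $\gamma'\uparrow\gamma$ completes this part. (Only $a,b,c>0$ is used here.)

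For the lower bound I would exploit the Euler integral \eqref{Integral-Rep-Gaussian-HGF}. Assume first $c>b$. Multiplying \eqref{Integral-Rep-Gaussian-HGF} by $z$ and writing $k_a(z):=z/(1-z)^{a}$ for the Koebe-type function yields
\[
f(z)=zF(a,b;c;z)=\int_0^1\lambda(t)\,\frac{k_a(tz)}{t}\,dt,\qquad
\lambda(t):=\frac{\Gamma(c)}{\Gamma(b)\,\Gamma(c-b)}\,t^{\,b-1}(1-t)^{\,c-b-1},
\]
where $\lambda\ge0$ on $(0,1)$ and $\int_0^1\lambda(t)\,dt=1$. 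Thus $f$ is a generalized integral transform, with nonnegative kernel of unit mass, of the function $k_a$, which belongs to $\mathcal{S}^{*}(1-\tfrac a2)$ with sharp order (its logarithmic derivative $1+az/(1-z)$ maps $\mathbb{D}$ onto the half-plane $\mathrm{Re}\,w>1-\tfrac a2$). The step I would carry out is to show that such a transform keeps $\mathrm{Re}\,p$ under control and that, for the datum $k_a$, one has $\mathrm{Re}\,p(z)\ge\lim_{x\to1^{-}}p(-x)$ for every $z\in\mathbb{D}$. Granting this, and using $f'(z)=\int_0^1\lambda(t)\,k_a'(tz)\,dt$ with $k_a'(w)=\bigl(1+(a-1)w\bigr)(1-w)^{-a-1}$, one gets
\[
\sigma(f)=\lim_{x\to1^{-}}\frac{-x f'(-x)}{f(-x)}
=\frac{\displaystyle\int_0^1\lambda(t)\,\frac{1-(a-1)t}{(1+t)^{a+1}}\,dt}{\displaystyle\int_0^1\frac{\lambda(t)}{(1+t)^{a}}\,dt},
\]
and it remains to verify that the right-hand side is $\ge 1-\tfrac{ab}{b+c}$ (and, for a second proof of the other inequality, $\le 1-\tfrac{ab}{2c}$). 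I would settle this by recognizing both integrals as values at $z=-1$ of contiguous Gaussian hypergeometric functions and collapsing them via a contiguous relation, or directly through a Beta-integral manipulation. The remaining case $c=b$ is trivial: then $F(a,b;b;z)=(1-z)^{-a}$, so $f=k_a$ and both bounds of the Lemma reduce to $1-\tfrac a2$.

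The hard part is the claim in the middle of the previous paragraph: that the integral transform does not cost too much order of starlikeness, that is, $\mathrm{Re}\bigl(zF'(a,b;c;z)/F(a,b;c;z)\bigr)\ge -\tfrac{ab}{b+c}$ on all of $\mathbb{D}$ and the extremal configuration is the real one. This is precisely where the hypothesis $0<a\le b\le c$ (symmetry of $F$ in $a,b$ allows assuming $a\le b$, and then $b\le c$ is the real restriction) is needed, and where genuine special-function input must enter: for instance a Gauss continued-fraction expansion of the relevant contiguous ratio whose partial numerators are sign-definite exactly when $0<a\le b\le c$ (so that the ratio maps $\mathbb{D}$ into a half-plane and its real part is extremized on $(-1,1)$), or the Ruscheweyh-duality / Fournier--Ruscheweyh machinery for transforms of the form $g\mapsto\int_0^1\lambda(t)\,g(tz)\,dt/t$. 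Everything else is bookkeeping.
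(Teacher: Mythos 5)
This lemma is not proved in the paper at all: it is imported verbatim from K\"{u}stner \cite{Kustner-2007-HGF-JMAA} and used as a black box, so there is no in-paper proof to match your argument against. Judged on its own terms, your attempt has one complete half and one genuine gap. The upper bound $\sigma(zF)\le 1-\frac{ab}{2c}$ is handled correctly: the Carath\'{e}odory bound $|q'(0)|\le 2$ applied to $q=(p-\gamma')/(1-\gamma')$ with $p=zf'/f$, $p'(0)=ab/c$, is a clean and valid argument (and your exclusion of $\gamma=1$ via the minimum principle, and the reduction of the case $c=b$ to $(1-z)^{-a}$, are both fine).

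The lower bound, however --- which is the inequality the paper actually uses, to conclude that $\Theta(z)=\frac{\beta}{2(1+\beta)}zF(\frac12,\frac1\beta+1;\frac1\beta+2;-z)$ is starlike --- is not proved. You reduce it, via the Euler integral \eqref{Integral-Rep-Gaussian-HGF}, to the claim that $\mathrm{Re}\bigl(zF'/F\bigr)$ is extremized along the negative real axis, i.e.\ that $\sigma(f)=\lim_{x\to1^-}(-xf'(-x)/f(-x))$, and you explicitly write ``granting this.'' That claim is essentially the entire content of K\"{u}stner's theorem: the nonnegativity and unit mass of the kernel $\lambda(t)$ do not by themselves prevent the real part of a quotient of integrals from dipping below its value on $(-1,0)$, and it is exactly here that the hypothesis $0<a\le b\le c$ enters through the Gauss continued fraction for the contiguous ratio $F(a,b+1;c+1;z)/F(a,b;c;z)$ (which under that hypothesis maps $\mathbb{D}$ into a controllable disk). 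You correctly identify this as ``the hard part'' and name the right tools, but you do not execute the step; moreover, even granting the reduction, the final verification that the resulting ratio of Beta-type integrals is at least $1-\frac{ab}{b+c}$ is also left unverified. As it stands the proposal is a proof of the easy inequality plus a plausible roadmap for the hard one, not a proof of the lemma.
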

\begin{definition}[{\bf Confluent hypergeometric function}]
	Let $a\in\mathbb{C}$ and $c\in\mathbb{C}\setminus\{0,-1,-2,\ldots\}$. The confluent (or Kummer) hypergeometric function is defined as the convergent power series
	\begin{align}\label{Confluent-HG}
		\Phi(a;c;z):={_1F_1}(a;c;z)=\sum_{j=0}^\infty\frac{(a)_j}{(c)_j}\frac{z^j}{j!}, \quad z\in\mathbb{D},
	\end{align}
	where $(x)_j$ is the Pochhammer symbol defined in \eqref{Pochhammer-Symbol}. 
\end{definition}
The function $\Phi(a;c;z)$ is analytic in $\mathbb{C}$ and satisfies the Kummer's differential equation 
\begin{align*}
	zw''(z)+(c-z)w'(z)-aw(z)=0.
\end{align*}
If we replace $b$ by $1/\varrho$ and $z$ by $z\varrho$ in the series \eqref{Gaussian-HG} and allow $\varrho\to0$, we obtain the series \eqref{Confluent-HG}.
   Below, we mention certain well-known properties of $\Phi(a;c;z)$ given by \eqref{Confluent-HG}.
   \begin{align}\label{Gamma-Function-Rep-CHGF}
   	\Phi(a;c;z)
   	=\frac{\Gamma(c)}{\Gamma(a)}\sum_{j=0}^\infty\frac{\Gamma(a+j)}{\Gamma(c+j)}\frac{z^j}{j!},
   \end{align}
 \begin{align}\label{Derivative-Property-CHGF}
 	\Phi'(a;c;z)=\frac{a}{c}\Phi(a+1;c+1;z),
 \end{align}
and 
 \begin{align}\label{Integral-Rep-CHGF}
 	\Phi(a;c;z)=
 	\frac{\Gamma(c)}{\Gamma(a)\Gamma(c-a)}\int_{0}^1{t^{a-1}(1-t)^{c-a-1}e^{tz}}\,dt, \quad (\mathrm{Re}\,c>\mathrm{Re}\,a>0).
 \end{align}
Further, the function 
\begin{align*}
	z\Phi(a;c;z)=z{_1F_1}(a;c;z)=z+\sum_{j=2}^\infty\frac{(a)_{j-1}}{(c)_{j-1}}\,\frac{z^j}{(j-1)!}, \quad z\in\mathbb{D},
\end{align*}
is the {\it normalised} (shifted) confluent hypergeometric function. The following result related to the starlikeness of $z\Phi(a;c;z)$ will be used to prove \Cref{Thrm-CHGF1}.
\begin{lemma}[{Miller and Mocanu \cite[p. 236]{Miller-Mocanu-Book-2000-Diff-Sub}}]\label{Lemma-Corollary-MilMocanu}
	If $c\leq 1+N(a-1)$, where  
	\begin{align*}
		N(a)=
		\begin{cases}
			|a|+\frac{1}{2} \quad\text{if}\quad |a|\geq\frac{1}{3},\\
			\frac{3(a)^2}{2}+\frac{2}{3} \quad\text{if}\quad |a|\leq\frac{1}{3},			
		\end{cases}
	\end{align*}
then $z\Phi(a;c;z)$ is starlike in $\mathbb{D}$.	
\end{lemma}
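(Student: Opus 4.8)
The plan is to recast the starlikeness of $z\Phi(a;c;z)$ as a first-order differential inequality by exploiting the Kummer equation satisfied by $\Phi$, and then to read off the bound from the Miller--Mocanu admissibility machinery. Write $g(z):=z\Phi(a;c;z)$, so that $g\in\mathcal A$ and $g$ is starlike if and only if $\mathrm{Re}\big(zg'(z)/g(z)\big)>0$ on $\mathbb D$. By \eqref{Derivative-Property-CHGF}, $zg'(z)/g(z)=1+z\Phi'(a;c;z)/\Phi(a;c;z)$, so one works with $p(z):=zg'(z)/g(z)$, which has $p(0)=1$ and is analytic at every point where $\Phi(a;c;z)\ne0$; the non-vanishing of $\Phi$ in $\mathbb D$ need not be assumed, since it drops out of the argument through the standard ``first radius at which $\mathrm{Re}\,p$ touches $0$'' normalization.

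The first step is to substitute $\Phi=g/z$ into Kummer's equation $z\Phi''+(c-z)\Phi'-a\Phi=0$, obtaining a second-order equation for $g$, and then to perform the usual logarithmic-derivative manipulation with $p=zg'/g$; a short computation yields
\begin{align*}
	zp'(z)+\big(p(z)-1\big)\big(p(z)+c-2\big)=z\big(p(z)+a-1\big),\qquad p(0)=1,
\end{align*}
or, equivalently, $w:=z\Phi'/\Phi=p-1$ solves $zw'(z)+w(z)\big(w(z)+c-1\big)=z\big(w(z)+a\big)$ with $w(0)=0$. Dividing the latter by $w+c-1$ rewrites it as $w+zw'/(w+c-1)=z+z(a-c+1)/(w+c-1)$, so the quantity $a-1$ --- the value at the origin of $w+a-1$ --- is what will govern the admissibility threshold; this is the source of the $N(a-1)$ in the statement. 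In the special case $c=a+1$ the equation is of exact Briot--Bouquet type, $w+zw'/(w+c-1)=z$, so Miller--Mocanu's ``open door'' lemma applies directly; for general $c$ one uses their full admissible-functions theorem in the same spirit.

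To finish, argue by contradiction. If $\mathrm{Re}\,p$ were not positive throughout $\mathbb D$, the Miller--Mocanu boundary (Jack-type) lemma yields $z_0\in\mathbb D$ with $p(z_0)=i\rho$ for some real $\rho$, $z_0p'(z_0)$ equal to a real number $\le-\tfrac12(1+\rho^2)$, and $\Phi\ne0$ on $\{|z|\le|z_0|\}$. Inserting $p(z_0)=i\rho$ into the displayed equation expresses $z_0p'(z_0)$ explicitly in terms of $z_0,\rho,a,c$; matching real and imaginary parts with the admissible value produces two scalar relations, and optimizing the resulting inequality over $z_0\in\mathbb D$ and $\rho\in\mathbb R$ shows it can be satisfied precisely when $c>1+N(a-1)$. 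Hence the hypothesis $c\le1+N(a-1)$ forbids such a point, so $\mathrm{Re}\,p>0$ on $\mathbb D$ and $g=z\Phi(a;c;z)$ is starlike. The two-branch shape of $N$, with its break at $|a-1|=\tfrac13$, is the fingerprint of this optimization: for $|a-1|\ge\tfrac13$ the extremal configuration sits on $\partial\mathbb D$, giving the linear bound $|a-1|+\tfrac12$, whereas for $|a-1|\le\tfrac13$ it is an interior critical point, giving $\tfrac32(a-1)^2+\tfrac23$; the two expressions agree at $|a-1|=\tfrac13$.

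The conceptual skeleton is routine; the hard part will be that last optimization --- pushing it through cleanly enough that the break-point $|a-1|=\tfrac13$ and the constants $\tfrac12,\tfrac23,\tfrac32$ come out exactly. In practice one sidesteps the raw estimation by choosing an affine normalization of the unknown so that it takes the value $a-1$ at the origin with its coefficients in canonical position, and then invoking Miller and Mocanu's open-door lemma, whose hypothesis is designed to reduce precisely to $c\le1+N(a-1)$; carrying out that reduction is the crux.
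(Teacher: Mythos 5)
The paper itself offers no proof of this lemma: it is quoted verbatim from Miller and Mocanu's monograph, so your attempt can only be judged against the known argument. Your skeleton is indeed the classical one, and your key computation is correct: substituting Kummer's equation into $w=z\Phi'/\Phi$ gives $zw'+w(w+c-1)=z(w+a)$, equivalently $zp'+(p-1)(p+c-2)=z(p+a-1)$ for $p=zg'/g$, and the reduction to a Jack--Miller--Mocanu boundary contradiction is exactly how the original proof proceeds. The problem is that everything of substance is deferred: you assert that ``optimizing the resulting inequality over $z_0$ and $\rho$'' yields the threshold $1+N(a-1)$, and then concede that ``carrying out that reduction is the crux.'' That optimization \emph{is} the lemma; a proposal that stops there is a plan, not a proof.

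Worse, if you actually perform the skipped step you find that the inequality in the statement is reversed (a typo in the paper which your blind attempt inherits). At a first boundary point $p(z_0)=i\rho$ with $z_0p'(z_0)=\sigma\le-\tfrac12(1+\rho^2)$, taking real parts of your equation gives $\sigma=\mathrm{Re}\left[z_0(i\rho+a-1)\right]+\rho^2+c-2>-\sqrt{(a-1)^2+\rho^2}+\rho^2+c-2$, so such a point is impossible whenever $c\ge\sup_{\rho}\bigl[\tfrac32-\tfrac32\rho^2+\sqrt{(a-1)^2+\rho^2}\bigr]$; maximizing in $u=\rho^2$ places the critical point at $u=\tfrac19-(a-1)^2$ and the supremum is exactly $1+N(a-1)$, the two branches of $N$ corresponding to an interior versus an endpoint maximum. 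Hence the correct sufficient condition is $c\ge 1+N(a-1)$, not $c\le 1+N(a-1)$: for instance $a=10$, $c=1$ satisfies $c\le 1+N(9)=\tfrac{21}{2}$, yet $z\Phi(10;1;z)=z+10z^2+\cdots$ violates the Bieberbach bound $|a_2|\le 2$ and is not even univalent. Note that the paper's own application in \Cref{Thrm-CHGF1} checks $c-1\ge N(a-1)$, consistent with the corrected direction. Your closing claim that the bad configuration occurs ``precisely when $c>1+N(a-1)$'' is therefore backwards, and the error sits exactly at the step you did not carry out. (A secondary, repairable gap: the possible zeros of $\Phi$ in $\mathbb{D}$ deserve more than a parenthetical wave; one runs the argument on $|z|<r$ up to the modulus of the first zero and then excludes that zero from the local behaviour of $p=zg'/g$ there.)
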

\section{Main Results}
By making use of \Cref{Lemma-3.4h-p132-Miller-Mocanu}, \Cref{Lemma-Kustner-2007-HGF-JMAA},  and the above mentioned properties of Gaussian hypergeometric function, we prove \Cref{Thrm-LemB-Impl-Neph-GHGF} and \Cref{Thrm-GHGF2}.
\begin{theorem}\label{Thrm-LemB-Impl-Neph-GHGF}
	Let $p:\mathbb{D}\to\mathbb{C}$ be analytic and satisfies $p(0)=1$. Let $\varphi_{\scriptscriptstyle{L}}(z):=\sqrt{1+z}$ and
	\begin{align*}
		p(z)+\beta zp'(z)\prec\varphi_{\scriptscriptstyle{L}}(z),\qquad \beta>0.
	\end{align*}
	If $\beta\geq\beta_L\approx0.158379$, then $p(z)\prec\varphi_{\scriptscriptstyle{Ne}}(z)$, where $\beta_L$ is the unique root of
	\begin{align}\label{Eq-BetaL}
		\frac{3}{\Gamma(-\frac{1}{2})}\sum_{j=0}^\infty\frac{\Gamma(-\frac{1}{2}+j)}{j!\,(1+j\beta)}-1 = 0.
	\end{align}
	The estimate on $\beta$ is best possible.
\end{theorem}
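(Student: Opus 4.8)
The plan is to produce the best dominant of the first subordination explicitly as a Gaussian hypergeometric function and then to settle the implication by a geometric comparison of a round disk with the nephroid region $\Omega_{Ne}$. Solving the linear differential equation $q(z)+\beta zq'(z)=\varphi_{\scriptscriptstyle{L}}(z)=\sqrt{1+z}$ with $q(0)=1$ by an integrating factor yields
\[
q_\beta(z)=\frac1\beta\,z^{-1/\beta}\!\int_0^z t^{1/\beta-1}\sqrt{1+t}\,dt=\frac1\beta\int_0^1 s^{1/\beta-1}\sqrt{1+sz}\,ds ,
\]
and matching the last integral with the Euler representation \eqref{Integral-Rep-Gaussian-HGF} (with $a=-\tfrac12$, $b=\tfrac1\beta$, $c=\tfrac1\beta+1$, and argument $-z$) identifies the candidate best dominant as $q_\beta(z)=F\!\left(-\tfrac12,\tfrac1\beta;\tfrac1\beta+1;-z\right)$.

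I would then apply \Cref{Lemma-3.4h-p132-Miller-Mocanu} with $\vartheta(w)\equiv w$ and $\lambda(w)\equiv\beta$, so that $\Theta(z)=\beta zq_\beta'(z)$ and $h(z)=q_\beta(z)+\beta zq_\beta'(z)=\varphi_{\scriptscriptstyle{L}}(z)$. By \eqref{Derivative-Property-GHGF},
\[
\Theta(z)=\frac{\beta}{2(1+\beta)}\;zF\!\left(\tfrac12,\tfrac1\beta+1;\tfrac1\beta+2;-z\right),
\]
and since $0<\tfrac12\le\tfrac1\beta+1\le\tfrac1\beta+2$ and starlikeness is rotation--invariant, \Cref{Lemma-Kustner-2007-HGF-JMAA} shows that $\Theta$ is starlike of order at least $\dfrac{5\beta+3}{2(3\beta+2)}>0$. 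Because $1+\dfrac{zq_\beta''(z)}{q_\beta'(z)}=\dfrac{z\Theta'(z)}{\Theta(z)}$, this already makes $q_\beta$ convex, hence univalent, as required by the lemma; moreover $\dfrac{zh'(z)}{\Theta(z)}=\dfrac1\beta+\dfrac{z\Theta'(z)}{\Theta(z)}$ has positive real part, which is hypothesis (iii). Therefore $p(z)+\beta zp'(z)\prec\varphi_{\scriptscriptstyle{L}}(z)$ implies $p\prec q_\beta$ and $q_\beta$ is the best dominant, so the theorem reduces to determining the values of $\beta$ for which $q_\beta\prec\varphi_{\scriptscriptstyle{Ne}}$.

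Since $\varphi_{\scriptscriptstyle{Ne}}$ is univalent, this last subordination is equivalent to $q_\beta(\mathbb D)\subset\Omega_{Ne}$. From $\sqrt{1+w}-1=\dfrac{w}{\sqrt{1+w}+1}$ together with $\operatorname{Re}\sqrt{\zeta}=\sqrt{\tfrac12\bigl(|\zeta|+\operatorname{Re}\zeta\bigr)}$, one checks that $\min_{|w|=s}\bigl|\sqrt{1+w}+1\bigr|=1+\sqrt{1-s}$ and hence $\max_{|w|\le s}\bigl|\sqrt{1+w}-1\bigr|=1-\sqrt{1-s}$; substituting into the integral representation of $q_\beta$ gives
\[
|q_\beta(z)-1|\le\frac1\beta\int_0^1 s^{1/\beta-1}\bigl(1-\sqrt{1-s}\,\bigr)\,ds=1-q_\beta(-1),\qquad z\in\mathbb D ,
\]
with equality at $z=-1$, so $q_\beta(\mathbb D)$ lies in the disk $|w-1|<1-q_\beta(-1)$. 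On the other hand \eqref{Equation-of-Nephroid} forces $(u-1)^2+v^2\ge\tfrac49$ on the nephroid, with equality only at the cusps $\tfrac13$ and $\tfrac53$, so $\{w:|w-1|<\tfrac23\}\subset\Omega_{Ne}$. Hence $q_\beta(\mathbb D)\subset\Omega_{Ne}$ precisely when $1-q_\beta(-1)\le\tfrac23$, i.e. $q_\beta(-1)\ge\tfrac13$ (if $q_\beta(-1)<\tfrac13$, the real boundary value $q_\beta(-1)$ lies outside $\overline{\Omega_{Ne}}$ and the subordination fails). Expanding $\sqrt{1-s}$ and integrating term by term gives $q_\beta(-1)=\dfrac{1}{\Gamma(-1/2)}\sum_{j\ge0}\dfrac{\Gamma(-1/2+j)}{j!\,(1+j\beta)}$, so $q_\beta(-1)\ge\tfrac13$ is exactly $\dfrac{3}{\Gamma(-1/2)}\sum_{j\ge0}\dfrac{\Gamma(-1/2+j)}{j!\,(1+j\beta)}-1\ge0$; the left side is continuous and strictly increasing in $\beta>0$, tending to $-1$ as $\beta\to0^+$ and to $2$ as $\beta\to\infty$, so it has a unique zero $\beta_L\approx0.158379$, the unique root of \eqref{Eq-BetaL}. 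Thus $\beta\ge\beta_L$ gives $p\prec q_\beta\prec\varphi_{\scriptscriptstyle{Ne}}$, and sharpness follows on taking $p=q_\beta$ for $\beta<\beta_L$: then $p(z)+\beta zp'(z)=\varphi_{\scriptscriptstyle{L}}(z)$ but $p(-1)=q_\beta(-1)<\tfrac13\notin\overline{\Omega_{Ne}}$, so $p\not\prec\varphi_{\scriptscriptstyle{Ne}}$.

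The main difficulty is the geometric step: proving the sharp sup-norm identity $\max_{|z|\le1}|q_\beta(z)-1|=1-q_\beta(-1)$ and then fitting $q_\beta(\mathbb D)$ into the \emph{non-convex} region $\Omega_{Ne}$, for which the round-disk bound above is the decisive tool; once \Cref{Lemma-Kustner-2007-HGF-JMAA} is available the differential-subordination part is routine.
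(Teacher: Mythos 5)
Your Step I (solving $q_\beta+\beta zq_\beta'=\sqrt{1+z}$, identifying $q_\beta=F(-\tfrac12,\tfrac1\beta;\tfrac1\beta+1;-z)$, and invoking K\"ustner's bound to verify the hypotheses of \Cref{Lemma-3.4h-p132-Miller-Mocanu}) coincides with the paper's argument; your observation that starlikeness of $\Theta$ makes $q_\beta$ convex, hence univalent, is a welcome explicit check of a hypothesis the paper leaves tacit. Where you genuinely diverge is in proving $q_\beta(\mathbb D)\subset\Omega_{Ne}$ for $\beta\geq\beta_L$. The paper compares the squared distances $d_1(\theta)$ and $d_2(\theta,\beta)$ from $(1,0)$ to the two boundary curves and settles the sign of $d_1-d_2$ by ``computer based numerical computation'' (see \Cref{Table-Numeric-Values-H}); only the necessity and the sharpness identity $d(\pi,\beta_L)=0$ are analytic. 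You instead trap $q_\beta(\mathbb D)$ in the disk $|w-1|<1-q_\beta(-1)$ via the elementary estimate $\sup_{|w|<s}|\sqrt{1+w}-1|=1-\sqrt{1-s}$ pushed through the Euler integral, and combine this with the fact (immediate from \eqref{Equation-of-Nephroid}) that $|w-1|<\tfrac23$ implies $w\in\Omega_{Ne}$; this converts the containment into the single scalar inequality $q_\beta(-1)\geq\tfrac13$, which is exactly \eqref{Eq-BetaL}, and makes both sufficiency and sharpness fully analytic. I checked the key inequalities: the minimum of $|\sqrt{1+w}+1|$ on $|w|=s$ is indeed attained at $w=-s$, the substitution $s=u^\beta$ gives $q_\beta(-1)=\int_0^1\sqrt{1-u^\beta}\,du$, which confirms your monotonicity claim, and the limits $-1$ and $2$ as $\beta\to0^+$ and $\beta\to\infty$ are correct. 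Your approach buys a rigorous, computer-free proof of the step the paper only verifies numerically, at the cost of exploiting the special circumstance that the largest disk centred at $1$ inside the nephroid has radius exactly $\tfrac23$, realized on the real axis where $q_\beta$ attains its extremal modulus --- the paper's distance-comparison template, by contrast, transfers to targets where no such disk reduction is available. The only cosmetic blemish is writing $p(-1)$ for a point outside $\mathbb D$ in the sharpness argument; one should phrase it as $q_\beta(-r)<\tfrac13$ for $r$ close to $1$, as you implicitly do elsewhere.
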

\begin{proof}
	An elementary analysis shows that the analytic function
	\begin{align}\label{Def-qbeta-Integral-Form-Lem}
		\Psi_\beta(z)=\frac{1}{\beta}\int_0^1\frac{t^{\frac{1}{\beta}-1}}{(1+zt)^{-1/2}}\,dt, \qquad \beta>0
	\end{align}
	is a solution of the first-order linear differential equation
	\begin{align*}
		\Psi_\beta(z)+\beta z\Psi'_\beta(z)=\varphi_{\scriptscriptstyle{L}}(z).
	\end{align*}  
	In view of the representation \eqref{Integral-Rep-Gaussian-HGF} of the Gaussian hypergeometric function, it is easy to see that the function $\Psi_\beta(z)$ given by \eqref{Def-qbeta-Integral-Form-Lem} has the form
	\begin{align}\label{qbeta-GHF-Form}
		\Psi_\beta(z)=F\left(-\frac{1}{2},\frac{1}{\beta};\frac{1}{\beta}+1;-z\right).
	\end{align}
For brevity, we now split the proof into two steps.\\
{\bf Step I}. {\it In this step, we prove that $p(z)+\beta zp'(z)\prec\varphi_{\scriptscriptstyle{L}}(z)$ implies $p(z)\prec \Psi_\beta(z)$, $\beta>0$}.
\par  
	For $\xi\in\mathbb{C}$, define $\vartheta(\xi)=\xi$ and $\lambda(\xi)=\beta$ so that
	\begin{align*}
	\Theta(z)=z\Psi'_\beta(z)\lambda(\Psi_\beta(z))=\beta z\Psi'_\beta(z)=
	\beta zF'\left(-\frac{1}{2},\frac{1}{\beta};\frac{1}{\beta}+1;-z\right).
	\end{align*}
	This, on using the identity \eqref{Derivative-Property-GHGF}, gives
	\begin{align}\label{Def-qbeta-GHGF-Form-Lem}
		\Theta(z)=\frac{\beta}{2(1+\beta)} zF\left(\frac{1}{2},\frac{1}{\beta}+1;\frac{1}{\beta}+2;-z\right).
	\end{align}
	We prove that the function $\Theta(z)$ given by \eqref{Def-qbeta-GHGF-Form-Lem} is starlike in $\mathbb{D}$ by showing that $\sigma(\Theta)\geq0$, where $\sigma(\cdot)$ is defined in \eqref{Order-ST}. 
	For the normalized hypergeometric function on the right side of \eqref{Def-qbeta-GHGF-Form-Lem}, we have $a=1/2,b=1/\beta+1$ and $c=1/\beta+2$, so that the condition $0<a\leq b\leq c$ easily holds. Therefore, by \Cref{Lemma-Kustner-2007-HGF-JMAA}, it follows that
	\begin{align*}
		\sigma\left(zF(a,b;c;z)\right)
		\geq 1-\frac{ab}{b+c}
		=1-\frac{1+\beta}{2\left(2+3\beta\right)}
		=\frac{3+5\beta}{2\left(2+3\beta\right)}
		>0 
		\qquad (\because\beta>0).	                                
	\end{align*}
	This shows that the hypergeometric function $zF\left(\frac{1}{2},\frac{1}{\beta}+1;\frac{1}{\beta}+2;-z\right)$ is starlike in $\mathbb{D}$, thereby proving the starlikeness of $\Theta(z)$ defined in \eqref{Def-qbeta-GHGF-Form-Lem}.	  
	Since $\beta>0$ and $\Theta(z)$ is starlike, the function $$h(z)=\vartheta\left(\Psi_\beta(z)\right)+\Theta(z)=\Psi_\beta(z)+\Theta(z)$$ 
	satisfies
	\begin{align*}
		\mathrm{Re}\left(\frac{zh'(z)}{\Theta(z)}\right)= \mathrm{Re}\left(\frac{1}{\beta}+\frac{z\Theta'(z)}{\Theta(z)}\right)>0.
	\end{align*}
	In view of \Cref{Lemma-3.4h-p132-Miller-Mocanu}, we conclude that the differential subordination 
	$$p(z)+\beta zp'(z)\prec \Psi_\beta(z)+\beta z\Psi'_\beta(z)=\varphi_{\scriptscriptstyle{L}}(z)$$ 
	implies the subordination
	$p(z)\prec \Psi_\beta(z)$, where $\Psi_\beta(z)$ is given by \eqref{Def-qbeta-Integral-Form-Lem} (or \eqref{qbeta-GHF-Form}).
	\par
		Now the desired subordination $p(z)\prec\varphi_{\scriptscriptstyle{Ne}}(z)$ will hold true if the subordination $\Psi_\beta(z)\prec\varphi_{\scriptscriptstyle{Ne}}(z)$ holds.\\ 
{\bf Step II}. {\it In this step, we prove that $\Psi_\beta(z)\prec\varphi_{\scriptscriptstyle{Ne}}(z)$ if, and only if, $\beta\geq\beta_L\approx0.158379$}.
	\par 
	{\it Necessity.}
	Let $\Psi_\beta(z)\prec\varphi_{\scriptscriptstyle{Ne}}(z)$, $z\in\mathbb{D}$. Then 
	\begin{align}\label{NS-DSI-GHGFN}
		\varphi_{\scriptscriptstyle {Ne}}(-1)<\Psi_\beta(-1)<\Psi_\beta(1)<\varphi_{\scriptscriptstyle {Ne}}(1).
	\end{align}
	On using the representation \eqref{qbeta-GHF-Form} and the identity \eqref{Gamma-Function-Rep-Gaussian-HGF}, the condition \eqref{NS-DSI-GHGFN} yields the two inequalities
	\begin{align*}
		\frac{1}{3}\leq F\left(-\frac{1}{2},\frac{1}{\beta};\frac{1}{\beta}+1;1\right)=\frac{1}{\Gamma(-\frac{1}{2})}\sum_{j=0}^\infty\frac{\Gamma(-\frac{1}{2}+j)}{j!\,(1+j\beta)}
	\end{align*}
	and
	\begin{align*}
		\frac{5}{3} \geq F\left(-\frac{1}{2},\frac{1}{\beta};\frac{1}{\beta}+1;-1\right)=\frac{1}{\Gamma(-\frac{1}{2})}\sum_{j=0}^\infty\frac{\Gamma(-\frac{1}{2}+j)}{j!\,(1+j\beta)}(-1)^j.
	\end{align*}
	Or equivalently,
	\begin{align*}
		\tau(\beta):= \frac{1}{\Gamma(-\frac{1}{2})}\sum_{j=0}^\infty\frac{\Gamma(-\frac{1}{2}+j)}{j!\,(1+j\beta)}-\frac{1}{3} \geq 0
	\end{align*}
	and
	\begin{align*}
		\delta(\beta):=\frac{5}{3} - \frac{1}{\Gamma(-\frac{1}{2})}
		\sum_{j=0}^\infty\frac{\Gamma(-\frac{1}{2}+j)}{j!\,(1+j\beta)}(-1)^j \geq 0.
	\end{align*}
	A computer based numerical computation shows that for $\beta\in(0,\infty)$, 
	\begin{align*}
		\tau(\beta)\in\left(-\frac{1}{3},\frac{2}{3}\right)
		\quad \text{ and } \quad
		\delta(\beta)\in\left(\frac{5}{3}-\sqrt{2},\;\frac{2}{3}\right).
	\end{align*}
	That is, as $\beta$ varies from $0$ to $\infty$, $\delta(\beta)$ is positive, while $\tau(\beta)$ takes positive as well as negative values. Moreover, 
	$$\tau'(\beta)=\frac{-1}{\Gamma(-\frac{1}{2})}
	\sum_{j=1}^\infty\frac{\Gamma(-\frac{1}{2}+j)}{(j-1)!\,(1+j\beta)^2}$$
	takes values from the interval $(0,\infty)$ for each $\beta\in(0,\infty)$. This shows that $\tau(\beta)$ is strictly increasing in $(0,\infty)$.  Therefore, both conditions $\tau(\beta)\geq0$ and $\delta(\beta)\geq0$ hold true for $\beta\geq\beta_L\approx0.158379$, where $\beta_L$ is the unique root of $\tau(\beta)$. See the plots of $\tau(\beta)$ and $\delta(\beta)$ in \Cref{Plots-TauDel-HGF-SRN}.
	\begin{figure}[H]
		\begin{subfigure}{0.45\textwidth}
			\includegraphics[height=2.0in,width=2.5in]{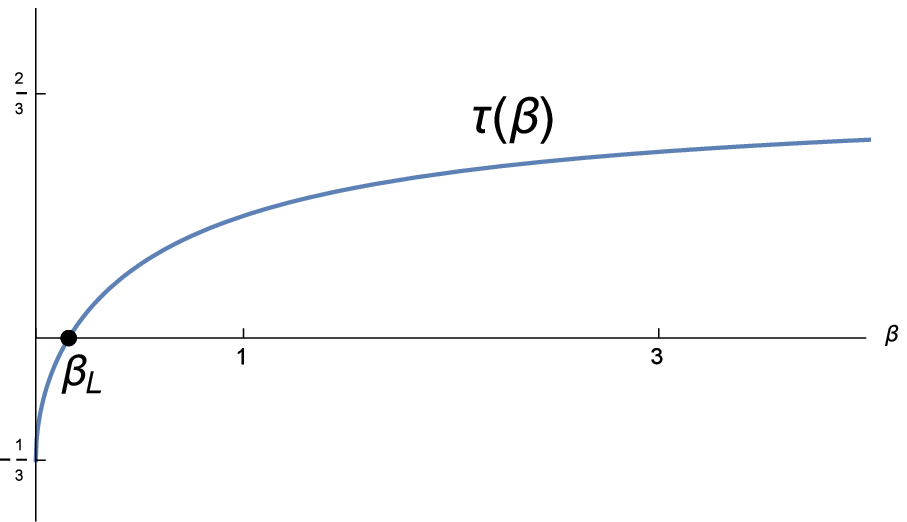}
		\end{subfigure}
		\begin{subfigure}{0.45\textwidth}
			\centering
			\includegraphics[height=2.0in,width=2.5in]{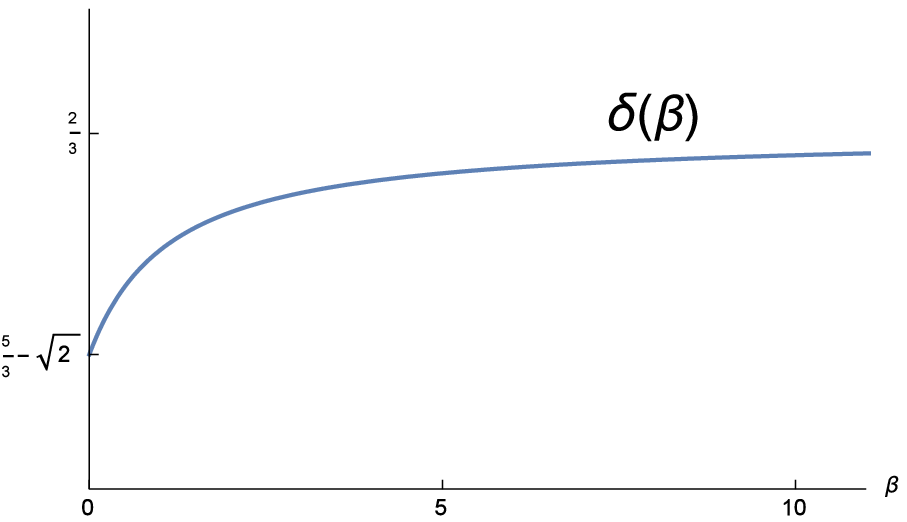}
		\end{subfigure}
		\caption{Plots of $\tau(\beta)$ and $\delta(\beta)$, $\beta>0$.}
		\label{Plots-TauDel-HGF-SRN}
	\end{figure}
	\par  
	{\it Sufficiency.}
	Since the function $\varphi_{\scriptscriptstyle{Ne}}(z)$ is univalent in $\mathbb{D}$ and $\Psi_\beta(0)=\varphi_{\scriptscriptstyle{Ne}}(0)=1$, it is sufficient to prove that $\Psi_\beta(\mathbb{D})\subset\varphi_{\scriptscriptstyle{Ne}}(\mathbb{D})$ for $\beta\geq\beta_L$.\\
	The square of the distance from the point (1,0) to the points on the nephroid curve \eqref{Equation-of-Nephroid} is 
	\begin{align*}
		d_1(\theta):=\frac{16}{9}-\frac{4 \cos^2\theta}{3}, \quad 0\leq\theta<2\pi,
	\end{align*}
	and the square of the distance from (1,0) to the points on the curve
	\begin{align*}
	\Psi_\beta(e^{i\theta})=&F\left(-\frac{1}{2},\frac{1}{\beta};\frac{1}{\beta}+1;-e^{i\theta}\right)\\
	                    =&\frac{1}{\Gamma(-\frac{1}{2})}\sum_{j=0}^\infty\frac{(-1)^j\Gamma(-\frac{1}{2}+j)}{j!\,(1+j\beta)}(\cos{j\theta}+i\sin{j\theta}), \quad 0\leq\theta<2\pi,
	\end{align*} 
	is given by
	\begin{align*}
		d_2(\theta,\beta):=\left(\frac{1}{\Gamma\left(-\frac{1}{2}\right)}
		\sum_{j=0}^{\infty}
		C(j,\beta)
		\cos(j\theta)-1\right)^2
		+\left(\frac{1}{\Gamma\left(-\frac{1}{2}\right)}
		\sum_{j=0}^{\infty}
		C(j,\beta)
		\sin(j\theta)\right)^2,
	\end{align*}
	where
	\begin{align*}
		C(j,\beta):=\frac{(-1)^j\Gamma(-\frac{1}{2}+j)}{j!\,(1+j\beta)}.
	\end{align*}
	Since the curves $\varphi_{\scriptscriptstyle{Ne}}(e^{i\theta})$ and $\Psi_\beta(e^{i\theta})$ are symmetric about the real axis, we may choose $\theta\in[0,\pi]$. Now, the difference of square of the distances from the point (1, 0) to the points on the boundary curves $\varphi_{\scriptscriptstyle{Ne}}(e^{i\theta})$ and $\Psi_\beta(e^{i\theta})$, respectively, is
	\begin{align*}
		d(\theta,\beta):=&d_1(\theta)-d_2(\theta,\beta)\\
		         =&\frac{16}{9}-\frac{4 \cos^2\theta}{3}-
		        \left(
		        \frac{\sum_{j=0}^{\infty}C(j,\beta)\cos(j\theta)}{\Gamma\left(-\frac{1}{2}\right)}
		         -1\right)^2
		         -\left(
		         \frac{\sum_{j=0}^{\infty}C(j,\beta)\sin(j\theta)}{\Gamma\left(-\frac{1}{2}\right)}
		         \right)^2,		         
	\end{align*}
  A computer based numerical computation shows that $d(\theta,\beta)\geq0$ for each $\theta\in[0,\pi]$ whenever $\beta\geq\beta_L\approx0.158379$ and $d(\theta,\beta)<0$ for some $\theta\in(\pi-\epsilon,\pi)$, $\epsilon\to0$ whenever $\beta<0.158379$, see \Cref{Table-Numeric-Values-H}. This shows that the region bounded by the curve $\Psi_\beta(e^{i\theta})$ is completely contained in $\varphi_{\scriptscriptstyle{Ne}}(\mathbb{D})$ whenever $\beta\geq\beta_L$. 
  Moreover, the estimate on $\beta$ is best possible as
   \begin{align*}
   	d(\pi,\beta_L)
   	=&\frac{4}{9}-
   	\left(\frac{1}{\Gamma\left(-\frac{1}{2}\right)}
   	\sum_{j=0}^{\infty} (-1)^jC(j,\beta_L)-1\right)^2\\
   	=&\frac{4}{9}-
   	\left(\frac{1}{\Gamma\left(-\frac{1}{2}\right)}
   	\sum_{j=0}^{\infty}\frac{\Gamma(-\frac{1}{2}+j)}{j!\,(1+j\beta_L)} -1\right)^2\\
   	=&\frac{4}{9}-
   	\left(\left(\tau(\beta_L)+\frac{1}{3}\right)-1\right)^2\\
   	=&\frac{4}{9}-
   	\left(\frac{1}{3}-1\right)^2 \qquad\qquad \left(\because \tau(\beta_L)=0\right)\\
   	=&0	         
   \end{align*}
  See \Cref{Figure-HGT1-I} for the graphical illustration of the above proved facts and the sharpness of the bound $\beta_L$ for the containment $\Psi_\beta(\mathbb{D})\subset\varphi_{\scriptscriptstyle{Ne}}(\mathbb{D})$. This proves the sufficiency of $\beta\geq\beta_L$ for the subordination $\Psi_\beta\prec\varphi_{\scriptscriptstyle{Ne}}$.\\
  The desired result now follows by combining the conclusions of Step I and Step II.
\end{proof}
\begin{center}
	\begin{table}[H]
		\centering
		\begin{tabular}{ |c|c|c| }
			\hline
			{\boldmath{$\theta$}}  & {\boldmath{$d(\theta,\beta)$, $\beta=\beta_L\approx0.158379$}}  & {\boldmath{$d(\theta,\beta)$, $\beta=0.1583737<\beta_L$}}\\
			\hline
			$3$          & $0.0893992$              & $0.0893943$\\
			\hline
			$3.14$       & $0.000230464$            & $0.000223834$\\
			\hline 
			$3.141$      & $0.0000596419$           & $0.0000530052$\\
			\hline
			$3.1415$     & $9.83806\times10^{-6}$   & $3.19942\times10^{-6}$\\
			\hline
			$3.14159$    & $6.4166\times10^{-6}$    & $-2.22177\times10^{-7}$\\
			\hline
			$3.141592$   & $6.40162\times10^{-6}$   & $-2.37156\times10^{-7}$\\
			\hline
			$3.1415926$  & $6.39958\times10^{-6}$   & $-2.39199\times10^{-7}$\\
			\hline
			$3.14159265$ & $6.39953\times10^{-6}$   & $-2.39247\times10^{-7}$\\
			\hline 
			$\pi$        & $0$                      & $-2.39248\times10^{-7}$\\
			\hline 
		\end{tabular}
		\caption{Numerical computations}
		\label{Table-Numeric-Values-H}
	\end{table}
\end{center}
\begin{figure}[h]
		\includegraphics[scale=.7]{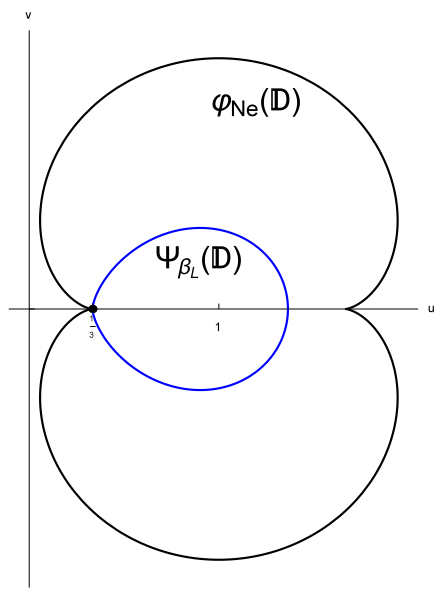}
		\caption{$\Psi_{\beta_L}(\mathbb{D})\subset\varphi_{\scriptscriptstyle{Ne}}(\mathbb{D})$}
		\label{Figure-HGT1-I}
\end{figure}
The following sufficient condition for the function class $\mathcal{S}^*_{Ne}$ is a direct application of \Cref{Thrm-LemB-Impl-Neph-GHGF} obtained by setting $p(z)=zf'(z)/f(z)$.

\begin{corollary}
	Let $f\in\mathcal{A}$, and let
	\begin{align}\label{Definition-J}
		\mathcal{G}(z):=1-\frac{zf'(z)}{f(z)}+\frac{zf''(z)}{f'(z)},\quad z\in\mathbb{D}.
	\end{align}
     If the function $f(z)$ satisfies
	\begin{align*}
		\left(1+\beta\,\mathcal{G}(z)\right)\frac{zf'(z)}{f(z)}\prec \varphi_{\scriptscriptstyle{L}}(z),
	\end{align*}
	then $f\in\mathcal{S}^*_{Ne}$ for $\beta\geq\beta_L$, where $\beta_L$ is the unique root of \eqref{Eq-BetaL}.
\end{corollary}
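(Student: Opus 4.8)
The plan is to reduce the corollary to a direct application of Theorem~\ref{Thrm-LemB-Impl-Neph-GHGF} by choosing $p(z)=zf'(z)/f(z)$, which is analytic in $\mathbb{D}$ with $p(0)=1$ since $f\in\mathcal{A}$. The main computational step is to verify that the hypothesis $\bigl(1+\beta\,\mathcal{G}(z)\bigr)\dfrac{zf'(z)}{f(z)}$ is precisely $p(z)+\beta zp'(z)$ for this choice of $p$. First I would compute the logarithmic derivative: writing $p(z)=zf'(z)/f(z)$, one has
\begin{align*}
\frac{zp'(z)}{p(z)}=1+\frac{zf''(z)}{f'(z)}-\frac{zf'(z)}{f(z)}=\mathcal{G}(z),
\end{align*}
where $\mathcal{G}$ is as defined in \eqref{Definition-J}. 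Hence $zp'(z)=p(z)\,\mathcal{G}(z)$, and therefore
\begin{align*}
p(z)+\beta zp'(z)=p(z)+\beta\,p(z)\,\mathcal{G}(z)=\bigl(1+\beta\,\mathcal{G}(z)\bigr)\frac{zf'(z)}{f(z)}.
\end{align*}

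Once this algebraic identity is in place, the stated hypothesis $\bigl(1+\beta\,\mathcal{G}(z)\bigr)\dfrac{zf'(z)}{f(z)}\prec\varphi_{\scriptscriptstyle{L}}(z)$ becomes exactly $p(z)+\beta zp'(z)\prec\varphi_{\scriptscriptstyle{L}}(z)$, which is the antecedent of Theorem~\ref{Thrm-LemB-Impl-Neph-GHGF}. Applying that theorem with $\beta\geq\beta_L$, where $\beta_L$ is the unique root of \eqref{Eq-BetaL}, yields $p(z)\prec\varphi_{\scriptscriptstyle{Ne}}(z)$, i.e. $\dfrac{zf'(z)}{f(z)}\prec\varphi_{\scriptscriptstyle{Ne}}(z)$. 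By the definition of $\mathcal{S}^*_{Ne}$ this is precisely the assertion that $f\in\mathcal{S}^*_{Ne}$, completing the proof.

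There is essentially no obstacle here: the corollary is a formal consequence of the theorem, and the only thing to watch is the sign and placement of the $\dfrac{zf'(z)}{f(z)}$ factor in the identity $p+\beta zp'=(1+\beta\mathcal{G})\,(zf'/f)$, together with noting that $p(z)\neq0$ near $z=0$ so that the manipulation $zp'(z)=p(z)\mathcal{G}(z)$ is legitimate as an identity of analytic functions (both sides being analytic in $\mathbb{D}$, it suffices that they agree where $p$ is nonvanishing, hence everywhere by the identity principle; alternatively one checks $zp'(z)=z f''(z)/f(z)+zf'(z)/f(z)-(zf'(z)/f(z))^2$ directly). I would present the short computation of $zp'(z)/p(z)=\mathcal{G}(z)$, substitute, invoke Theorem~\ref{Thrm-LemB-Impl-Neph-GHGF}, and conclude.
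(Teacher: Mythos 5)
Your proof is correct and takes exactly the route the paper intends: the paper presents this corollary without proof, describing it only as ``a direct application of \Cref{Thrm-LemB-Impl-Neph-GHGF} obtained by setting $p(z)=zf'(z)/f(z)$'', and your computation of $zp'(z)/p(z)=\mathcal{G}(z)$ merely writes out the routine verification that the paper leaves implicit.
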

\begin{theorem}\label{Thrm-GHGF2}
	Let the analytic $p$ satisfies $p(0)=1$ and let
	\begin{align*}
	p(z)+\beta zp'(z)\prec 1+z, \qquad \beta>0.
	\end{align*}
	Then $p\prec\varphi_{\scriptscriptstyle{Ne}}$ whenever $\beta\geq1/2$ and this estimate on $\beta$ is sharp.
\end{theorem}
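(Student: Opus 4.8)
The plan is to run the same two-step scheme as in the proof of \Cref{Thrm-LemB-Impl-Neph-GHGF}; the key simplification is that, for the dominant $1+z$, the hypergeometric function attached to the subordination collapses to an affine map, so \Cref{Lemma-Kustner-2007-HGF-JMAA} is not even needed and the containment part of the argument becomes exact rather than numerical.

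\emph{The dominant.} First I would solve the linear equation $\Psi_\beta(z)+\beta z\Psi_\beta'(z)=1+z$ by the integrating-factor method already used for \eqref{Def-qbeta-Integral-Form-Lem}, which gives $\Psi_\beta(z)=\tfrac1\beta\int_0^1 t^{1/\beta-1}(1+zt)\,dt$; by \eqref{Integral-Rep-Gaussian-HGF} this equals $F\!\left(-1,\tfrac1\beta;\tfrac1\beta+1;-z\right)$, and since $(-1)_j=0$ for $j\ge2$ the series terminates, so $\Psi_\beta(z)=1+\dfrac{z}{1+\beta}$.

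\emph{Step I.} I would apply \Cref{Lemma-3.4h-p132-Miller-Mocanu} with $\vartheta(\xi)=\xi$ and $\lambda(\xi)=\beta$. Then $\Theta(z)=\beta z\Psi_\beta'(z)=\tfrac{\beta}{1+\beta}z$ is starlike, $h(z)=\Psi_\beta(z)+\Theta(z)=1+z$ is convex, and $\mathrm{Re}\bigl(zh'(z)/\Theta(z)\bigr)=\tfrac{1+\beta}{\beta}>0$; hence $p(z)+\beta zp'(z)\prec1+z$ implies $p\prec\Psi_\beta$, with $\Psi_\beta$ the best dominant. In particular the claimed implication holds if and only if $\Psi_\beta\prec\varphi_{\scriptscriptstyle{Ne}}$, the ``only if'' coming from the admissible choice $p=\Psi_\beta$, which satisfies $p+\beta zp'=1+z\prec1+z$.

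\emph{Step II and sharpness.} As in the sufficiency part of the proof of \Cref{Thrm-LemB-Impl-Neph-GHGF}, I would compare squared distances from the point $1$ to the two boundary curves. For the nephroid this is $d_1(\theta)=\tfrac{16}{9}-\tfrac{4\cos^2\theta}{3}$, with minimum $\tfrac49$ attained at $\theta\in\{0,\pi\}$ (the cusps $\varphi_{\scriptscriptstyle{Ne}}(1)=\tfrac53$, $\varphi_{\scriptscriptstyle{Ne}}(-1)=\tfrac13$), while for $\Psi_\beta(\partial\mathbb{D})$ it is the constant $|\Psi_\beta(e^{i\theta})-1|^2=1/(1+\beta)^2$. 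Thus the open disk $\Psi_\beta(\mathbb{D})$ avoids the Jordan curve $\partial\Omega_{Ne}$, and hence (being connected and containing the interior point $1$) lies in $\Omega_{Ne}$, exactly when $\tfrac49\ge1/(1+\beta)^2$, i.e. $\beta\ge\tfrac12$; combined with $\varphi_{\scriptscriptstyle{Ne}}$ univalent and $\Psi_\beta(0)=1=\varphi_{\scriptscriptstyle{Ne}}(0)$ this is equivalent to $\Psi_\beta\prec\varphi_{\scriptscriptstyle{Ne}}$, and with Step I it proves the implication for $\beta\ge\tfrac12$. For sharpness, if $\beta<\tfrac12$ then $1/(1+\beta)>\tfrac23$, so $\Psi_\beta(\mathbb{D})$ contains real points exceeding the cusp value $\tfrac53$ and is not contained in $\Omega_{Ne}$; taking $p=\Psi_\beta$ then exhibits the failure of the implication. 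I do not anticipate a genuine obstacle here; the only step needing a moment's care is the borderline $\beta=\tfrac12$, where $\Psi_{1/2}(\mathbb{D})$ is the open disk of radius $\tfrac23$ that meets $\partial\Omega_{Ne}$ only in the boundary limit and so is still contained in the open region $\Omega_{Ne}$.
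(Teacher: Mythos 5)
Your proposal is correct and follows the same two-step scheme as the paper: solve the linear ODE to get the dominant $q_\beta(z)=F\!\left(-1,\tfrac1\beta;\tfrac1\beta+1;-z\right)$, apply \Cref{Lemma-3.4h-p132-Miller-Mocanu} with $\vartheta(\xi)=\xi$ and $\lambda(\xi)=\beta$ to reduce the problem to $q_\beta\prec\varphi_{\scriptscriptstyle{Ne}}$, and then settle that subordination by comparing squared distances from the point $1$ to the two boundary curves. The one place where you genuinely depart from (and improve on) the paper is Step II: the paper derives $\beta\ge\tfrac12$ from the real-axis endpoint inequalities and then asserts the containment $q_\beta(\mathbb{D})\subset\varphi_{\scriptscriptstyle{Ne}}(\mathbb{D})$ ``as in \Cref{Thrm-LemB-Impl-Neph-GHGF}'', i.e.\ by the same partly numerical distance check used there, whereas you exploit the terminating hypergeometric series to write $q_\beta(z)=1+z/(1+\beta)$, so that $q_\beta(\mathbb{D})$ is exactly the open disk $|w-1|<1/(1+\beta)$ and the containment becomes the exact inequality $1/(1+\beta)^2\le\min_\theta d_1(\theta)=\tfrac49$, equivalent to $\beta\ge\tfrac12$. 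This renders both sufficiency and sharpness (via the real points beyond the cusp value $\tfrac53$ together with the best-dominant property of $q_\beta$) fully analytic, with no computer verification required; your handling of the borderline case $\beta=\tfrac12$ and the observation that $\Omega_{Ne}$ meets the real axis precisely in $(\tfrac13,\tfrac53)$ are both correct.
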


\begin{proof}
	Consider the first-order linear differential equation 
	$$q_\beta(z)+\beta zq'_\beta(z)=1+z$$ 
	whose analytic solution is the function $q_\beta(z)$ given by
	\begin{align*}
		q_\beta(z)=\frac{1}{\beta}\int_0^1 t^{\frac{1}{\beta}-1}(1+zt)\,dt=F\left(-1,\frac{1}{\beta};\frac{1}{\beta}+1;-z\right),\quad z\in\mathbb{D}.
	\end{align*}
	as its solution. Defining the functions $\vartheta$ and $\lambda$ as in \Cref{Thrm-LemB-Impl-Neph-GHGF} we obtain 
	\begin{align*}
		\Theta(z)=zq'_\beta(z)\lambda(q_\beta(z))=\beta zq'_\beta(z)
		=\frac{\beta}{1+\beta} zF\left(0,\frac{1}{\beta}+1;\frac{1}{\beta}+2;-z\right)
		=\frac{\beta}{1+\beta} z,
	\end{align*}
	which is clearly a starlike function in $\mathbb{D}$. Also, $$h(z)=\vartheta\left(q_\beta(z)\right)+\Theta(z)=q_\beta(z)+\Theta(z)$$ 
	satisfies
	$\mathrm{Re}\left({zh'(z)}/{\Theta(z)}\right)>0$ in $\mathbb{D}$. Therefore, it follows from \Cref{Lemma-3.4h-p132-Miller-Mocanu} that 
	$$p(z)+\beta zp'(z)\prec 1+z=q_\beta(z)+\beta zq'_\beta(z)$$ 
	implies $p\prec {q_\beta}$.
	\par 
	To get $p\prec\varphi_{\scriptscriptstyle{Ne}}$, it now remains to prove that $q_\beta\prec\varphi_{\scriptscriptstyle{Ne}}$.\\ 
	If $q_\beta\prec\varphi_{\scriptscriptstyle{Ne}}$, then
	$\varphi_{\scriptscriptstyle {Ne}}(-1)<q_\beta(-1)<q_\beta(1)<\varphi_{\scriptscriptstyle {Ne}}(1)$ which is  
 is equivalent to 
	\begin{align*}
		F\left(-1,\frac{1}{\beta};\frac{1}{\beta}+1;1\right)-\frac{1}{3}\geq0 \;
		\quad \text{or}, \quad \beta\geq \frac{1}{2}
	\end{align*}
	and
	\begin{align*}
		\frac{5}{3} - F\left(-1,\frac{1}{\beta};\frac{1}{\beta}+1;-1\right)\geq0 \; 
		\quad \text{or}, \quad	
		\beta\geq -\frac{5}{2}
	\end{align*}
	Therefore the necessary condition for $q_\beta\prec\varphi_{\scriptscriptstyle{Ne}}$ is that $\beta\geq\max\{1/2,-5/2\}=1/2$.\\
	As in \Cref{Thrm-LemB-Impl-Neph-GHGF}, it can be easily verified that whenever $\beta\geq1/2$, the distance $d_1(\theta)$ from (1,0) to the points on the nephroid curve $\varphi_{\scriptscriptstyle{Ne}}(e^{i\theta})$ is always greater than or equal to the distance $d_2(\theta,\beta)$ from (1,0) to the points on the curve $q_\beta(e^{i\theta})$, $0\leq\theta<2\pi$. This shows that $q_\beta(\mathbb{D})\subset\varphi_{\scriptscriptstyle{Ne}}(\mathbb{D})$ whenever $\beta\geq1/2$. Hence $\beta\geq1/2$ is also sufficient for the subordination $q_\beta\prec\varphi_{\scriptscriptstyle{Ne}}$ to hold true. Moreover,
	\begin{align*}
		d_1(0)=d_2(0,1/2)
		\quad \text{and} \quad
		d_1(\pi)=d_2(\pi,1/2).
	\end{align*}
	Therefore the estimate on $\beta$ can not be decreased further.
\end{proof}
\begin{corollary}
	Let $\mathcal{G}(z)$ be given by \eqref{Definition-J}. If $f\in\mathcal{A}$ satisfies the subordination
	\begin{align*}
		\left(1+\beta\,\mathcal{G}(z)\right)\frac{zf'(z)}{f(z)}\prec 1+z,
	\end{align*}
	then $f\in\mathcal{S}^*_{Ne}$ whenever $\beta\geq1/2$.
\end{corollary}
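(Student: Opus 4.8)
The plan is to obtain this as an immediate application of \Cref{Thrm-GHGF2} through the substitution $p(z):=zf'(z)/f(z)$. First I would note that since $f\in\mathcal{A}$ has the expansion $f(z)=z+a_2z^2+\cdots$ with $f(z)/z$ non-vanishing in a neighbourhood of the origin, the function $p$ is analytic in $\mathbb{D}$ and satisfies $p(0)=1$; hence $p$ is an admissible choice in \Cref{Thrm-GHGF2}.

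The only computation needed is to identify $p(z)+\beta zp'(z)$ with the left-hand side of the hypothesis. Logarithmic differentiation of $p(z)=zf'(z)/f(z)$ gives
\[
\frac{zp'(z)}{p(z)}=1+\frac{zf''(z)}{f'(z)}-\frac{zf'(z)}{f(z)}=\mathcal{G}(z),
\]
where $\mathcal{G}$ is the function defined in \eqref{Definition-J}. Therefore $zp'(z)=p(z)\,\mathcal{G}(z)$, and consequently
\[
p(z)+\beta zp'(z)=\bigl(1+\beta\,\mathcal{G}(z)\bigr)p(z)=\bigl(1+\beta\,\mathcal{G}(z)\bigr)\frac{zf'(z)}{f(z)}.
\]

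With this identity in hand, the assumed subordination $\bigl(1+\beta\,\mathcal{G}(z)\bigr)\frac{zf'(z)}{f(z)}\prec 1+z$ is precisely $p(z)+\beta zp'(z)\prec 1+z$. Applying \Cref{Thrm-GHGF2}, for every $\beta\geq 1/2$ we conclude $p\prec\varphi_{\scriptscriptstyle{Ne}}$, that is, $zf'(z)/f(z)\prec\varphi_{\scriptscriptstyle{Ne}}(z)$, which is exactly the statement $f\in\mathcal{S}^*_{Ne}$.

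There is no real obstacle here: the entire substance of the result is contained in \Cref{Thrm-GHGF2}, and this corollary merely restates that theorem as a condition directly on $f$. The only points that require (routine) care are the verification $p(0)=1$ and the logarithmic-derivative identity $zp'(z)=p(z)\,\mathcal{G}(z)$.
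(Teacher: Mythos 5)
Your proof is correct and follows exactly the route the paper intends: the corollary is stated as a direct application of \Cref{Thrm-GHGF2} obtained by setting $p(z)=zf'(z)/f(z)$, and your verification of $p(0)=1$ together with the identity $zp'(z)=p(z)\,\mathcal{G}(z)$ supplies the only details the paper leaves implicit.
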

In the following theorem, we make use of \Cref{Lemma-3.4h-p132-Miller-Mocanu}, \Cref{Lemma-Corollary-MilMocanu}, and the properties of the confluent hypergeometric function $\Phi(a;c;z)$ defined in \eqref{Confluent-HG}.
\begin{theorem}\label{Thrm-CHGF1}
	Let $p$ be analytic in $\mathbb{D}$ satisfying $p(0)=1$. For $\varphi_{\scriptscriptstyle{e}}(z):=e^z$, let the differential subordination
	\begin{align*}
		p(z)+\beta zp'(z)\prec \varphi_{\scriptscriptstyle{e}}(z), \qquad \beta>0.
	\end{align*}
	holds. Then $p(z)\prec\varphi_{\scriptscriptstyle{Ne}}(z)$ whenever $\beta\geq\beta_e\approx1.14016$, where $\beta_e$ is the unique solution of  
	\begin{align*}
		\sum_{j=0}^\infty\frac{1}{j!\,(1+j\beta)}-\frac{5}{3}=0.
	\end{align*}	
	The estimate on $\beta$ can not be improved further.
\end{theorem}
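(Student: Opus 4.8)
The plan is to reproduce, in the confluent setting, the two–step strategy used for \Cref{Thrm-LemB-Impl-Neph-GHGF} and \Cref{Thrm-GHGF2}. First I would solve the linear equation $q_\beta(z)+\beta zq_\beta'(z)=\varphi_{\scriptscriptstyle{e}}(z)=e^z$ with $q_\beta(0)=1$: matching coefficients gives $q_\beta(z)=\sum_{n\ge0}\frac{z^n}{n!\,(1+n\beta)}$, and since $\frac{1}{1+n\beta}=\frac{(1/\beta)_n}{(1/\beta+1)_n}$ — equivalently, from $q_\beta(z)=\frac1\beta\int_0^1 t^{1/\beta-1}e^{zt}\,dt$ together with \eqref{Integral-Rep-CHGF} — one identifies $q_\beta(z)=\Phi\!\left(\tfrac1\beta;\tfrac1\beta+1;z\right)$, the function \eqref{Confluent-HG}. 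The theorem then follows by combining \textbf{Step~I}, that $p(z)+\beta zp'(z)\prec e^z$ implies $p\prec q_\beta$ for every $\beta>0$, with \textbf{Step~II}, that $q_\beta\prec\varphi_{\scriptscriptstyle{Ne}}$ holds precisely when $\beta\ge\beta_e$.

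For Step~I I would apply \Cref{Lemma-3.4h-p132-Miller-Mocanu} with $\vartheta(\xi)=\xi$ and $\lambda(\xi)=\beta$, so that $h(z)=\vartheta(q_\beta(z))+\Theta(z)=q_\beta(z)+\beta zq_\beta'(z)=e^z$ while, by the differentiation identity \eqref{Derivative-Property-CHGF}, $\Theta(z)=\beta zq_\beta'(z)=\frac{\beta}{1+\beta}\,z\,\Phi\!\left(\tfrac1\beta+1;\tfrac1\beta+2;z\right)$. The hypotheses are then checked through condition~(i) and condition~(iii): condition~(i) holds because $h(z)=e^z$ is convex in $\mathbb{D}$, since $1+zh''(z)/h'(z)=1+z$ has positive real part; for condition~(iii), Kummer's transformation $e^{-z}\Phi(a;c;z)=\Phi(c-a;c;-z)$ gives $\frac{zh'(z)}{\Theta(z)}=\frac{1+\beta}{\beta}\cdot\frac{1}{\Phi\!\left(1;\tfrac1\beta+2;-z\right)}$, and because $\left(\tfrac1\beta+2\right)_j>(2)_j=(j+1)!$ one has $\bigl|\Phi(1;\tfrac1\beta+2;-z)-1\bigr|\le\sum_{j\ge1}\frac{1}{(j+1)!}=e-2<1$ on $\overline{\mathbb{D}}$, whence $\operatorname{Re}\Phi(1;\tfrac1\beta+2;-z)>3-e>0$ and $\operatorname{Re}\bigl(zh'(z)/\Theta(z)\bigr)>0$. (Paralleling \Cref{Thrm-LemB-Impl-Neph-GHGF} and \Cref{Thrm-GHGF2}, one may instead verify condition~(ii) — the starlikeness of $\Theta$ — via \Cref{Lemma-Corollary-MilMocanu}.) \Cref{Lemma-3.4h-p132-Miller-Mocanu} then yields $p\prec q_\beta$, with $q_\beta$ the best dominant.

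Step~II has a necessity half and a sufficiency half. Necessity: since $q_\beta$ and $\varphi_{\scriptscriptstyle{Ne}}$ have real Taylor coefficients and $\varphi_{\scriptscriptstyle{Ne}}$ is increasing on $(-1,1)$, a subordination $q_\beta\prec\varphi_{\scriptscriptstyle{Ne}}$ forces the real boundary values to lie in $[\tfrac13,\tfrac53]$, i.e. $\tfrac13\le q_\beta(-1)$ and $q_\beta(1)\le\tfrac53$. The binding one is $q_\beta(1)=\sum_{j\ge0}\frac{1}{j!\,(1+j\beta)}\le\tfrac53$: each term with $j\ge1$ is strictly decreasing in $\beta$, so $q_\beta(1)$ decreases strictly from $e$ (as $\beta\to0^+$) to $1$ (as $\beta\to\infty$); hence $\sum_{j\ge0}\frac{1}{j!(1+j\beta)}-\tfrac53=0$ has a unique root $\beta_e$ and $q_\beta(1)\le\tfrac53\iff\beta\ge\beta_e$, while $q_\beta(-1)\ge e^{-1}>\tfrac13$ is automatic (the alternating series $q_\beta(-1)$ is increasing in $\beta$). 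Sufficiency: for $\beta\ge\beta_e$ I would argue, exactly as for \Cref{Thrm-LemB-Impl-Neph-GHGF}, using that the nephroid region \eqref{Equation-of-Nephroid} is starlike with respect to its centre $1$ (the polar angle of $\varphi_{\scriptscriptstyle{Ne}}(e^{i\theta})-1$ is nondecreasing in $\theta$): it then suffices to compare, for $\theta\in[0,\pi]$, the squared distance $d_1(\theta)=\tfrac{16}{9}-\tfrac43\cos^2\theta$ from $(1,0)$ to the nephroid with $d_2(\theta,\beta)=|q_\beta(e^{i\theta})-1|^2$ from $(1,0)$ to $q_\beta(\partial\mathbb{D})$ — the latter expanded via \eqref{Gamma-Function-Rep-CHGF} — and to show $d_1(\theta)-d_2(\theta,\beta)\ge0$ for all such $\theta$ when $\beta\ge\beta_e$, with $d_1(\pi)-d_2(\pi,\beta_e)=0$. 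This gives $q_\beta(\mathbb{D})\subset\varphi_{\scriptscriptstyle{Ne}}(\mathbb{D})$ and locates the extremal contact at the cusp $\tfrac53$.

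For sharpness of $\beta_e$: when $\beta<\beta_e$ take $p=q_\beta$, which satisfies $p+\beta zp'=e^z\prec e^z$ trivially, yet $q_\beta(r)\uparrow q_\beta(1)>\tfrac53$ as $r\to1^-$ along the reals, while — by univalence of $\varphi_{\scriptscriptstyle{Ne}}$ and its real coefficients — the only real values of $\varphi_{\scriptscriptstyle{Ne}}$ on $\mathbb{D}$ lie in $(\tfrac13,\tfrac53)$; hence $q_\beta\not\prec\varphi_{\scriptscriptstyle{Ne}}$ and $\beta_e$ cannot be lowered. I expect the sufficiency half of Step~II to be the real obstacle: $\varphi_{\scriptscriptstyle{Ne}}(\mathbb{D})$ is not convex, so $q_\beta(\mathbb{D})\subset\varphi_{\scriptscriptstyle{Ne}}(\mathbb{D})$ cannot be read off from $q_\beta$ being an average of the points $e^{zt}$, $t\in(0,1)$, and the inequality $d_1\ge d_2$, uniformly in $\theta\in[0,\pi]$ and in $\beta\ge\beta_e$, does not appear to reduce to a short closed form — a careful, partly numerical, estimate anchored by the identity $d_1(\pi)-d_2(\pi,\beta_e)=0$ seems unavoidable.
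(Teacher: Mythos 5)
Your proposal is correct and follows the same two--step skeleton as the paper's proof: identify the ODE solution $\Phi\left(\tfrac1\beta;\tfrac1\beta+1;z\right)$, reduce via \Cref{Lemma-3.4h-p132-Miller-Mocanu} to the subordination of that function to $\varphi_{\scriptscriptstyle{Ne}}$, and settle the latter by the real boundary-value test plus the squared-distance comparison $d_1\geq d_2$ (which, in both your argument and the paper's, is ultimately a numerical verification). The genuinely different ingredient is how you verify the hypotheses of \Cref{Lemma-3.4h-p132-Miller-Mocanu}: the paper checks condition (ii), proving $\Theta(z)=\frac{\beta}{1+\beta}z\Phi\left(\tfrac1\beta+1;\tfrac1\beta+2;z\right)$ starlike via \Cref{Lemma-Corollary-MilMocanu} and then getting (iii) for free from $zh'/\Theta=1/\beta+z\Theta'/\Theta$, whereas you check (i) (convexity of $h=e^z$) and establish (iii) directly through Kummer's transformation and the bound $\left|\Phi\left(1;\tfrac1\beta+2;-z\right)-1\right|\leq e-2<1$; your route is self-contained (it bypasses \Cref{Lemma-Corollary-MilMocanu} entirely) at the cost of one extra identity, and both are valid. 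Your monotonicity argument for the uniqueness of $\beta_e$ and your explicit sharpness witness ($p=q_\beta$ is admissible for $\beta<\beta_e$ yet $q_\beta(r)>\tfrac53$ for $r$ near $1$) are in fact cleaner justifications than the paper's purely computational ones. Two small points: the extremal contact occurs at $\theta=0$ (the cusp $\tfrac53=\varphi_{\scriptscriptstyle{Ne}}(1)$), so your anchoring identity should read $d_1(0)-d_2(0,\beta_e)=0$ rather than $d_1(\pi)-d_2(\pi,\beta_e)=0$ (your own sentence about the cusp $\tfrac53$ shows this is only a slip); and the issue you flag --- that comparing $d_1(\theta)$ with $d_2(\theta,\beta)$ at the \emph{same} parameter $\theta$ yields containment only after invoking starlikeness of the nephroid about $1$ and a matching of polar angles --- is a gap the paper shares and likewise resolves only numerically and graphically, so your assessment that this step is the real obstacle is accurate.
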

\begin{proof}
	Consider the function
	\begin{align}\label{Def-Sol-CHGF}
		\psi_\beta(z)=\frac{1}{\beta}\int_0^1\
		e^{zt}\,t^{\frac{1}{\beta}-1}\,dt, \qquad \beta>0.
	\end{align}
	It can be easily verified that $\psi_\beta(z)$ given by \eqref{Def-Sol-CHGF} is an analytic solution of the  linear differential equation
	\begin{align*}
		\psi_\beta(z)+\beta z\psi'_\beta(z)=\varphi_{\scriptscriptstyle{e}}(z).
	\end{align*}  
	Using the representation \eqref{Integral-Rep-CHGF} of the confluent hypergeometric function, it can be observed that
	\begin{align}\label{qbeta-CGHF-Form}
		\psi_\beta(z)=\Phi\left(\frac{1}{\beta};\frac{1}{\beta}+1;z\right).
	\end{align}
	Define $\vartheta(\xi)=\xi$ and $\lambda(\xi)=\beta$ so that 
	\begin{align*}
		\Theta(z)=z\psi'_\beta(z)\lambda(\psi_\beta(z))=\beta z\psi'_\beta(z)=
		\beta z\Phi'\left(\frac{1}{\beta};\frac{1}{\beta}+1;z\right),
	\end{align*}
	which upon using \eqref{Derivative-Property-CHGF} yields
	\begin{align}\label{Def-qbeta-CHGF-Form-Lem}
		\Theta(z)=\frac{\beta}{(1+\beta)} z\Phi\left(\frac{1}{\beta}+1;\frac{1}{\beta}+2;z\right).
	\end{align}
  We now use \Cref{Lemma-Corollary-MilMocanu} to prove that $\Theta(z)$ given by \eqref{Def-qbeta-CHGF-Form-Lem} is starlike in $\mathbb{D}$. Here $a=1/\beta+1$ and $c=1/\beta+2$ so that  $c-1=1/\beta+1$ and 
  	\begin{align*}
  	N(a-1)=
  	\begin{cases}
  		\frac{1}{\beta}+\frac{1}{2} \quad\text{if}\quad \frac{1}{\beta}\geq\frac{1}{3},\\
  		\frac{3}{2\beta^2}+\frac{2}{3} \quad\text{if}\quad \frac{1}{\beta}\leq\frac{1}{3}.			
  	\end{cases}
  \end{align*}
  In both the cases, the inequality $c-1\geq N(a-1)$ holds. Therefore, $\Theta(z)$ is starlike and consequently the function 
	$$h(z)=\vartheta\left(\psi_\beta(z)\right)+\Theta(z)=\psi_\beta(z)+\Theta(z)$$ 
	satisfies $\mathrm{Re}\left({zh'(z)}/{\Theta(z)}\right)>0$ for each $z\in\mathbb{D}$.
	In view of \Cref{Lemma-3.4h-p132-Miller-Mocanu}, we conclude that the differential subordination 
	 $p(z)+\beta zp'(z)\prec \psi_\beta(z)+\beta z\psi'_\beta(z)=\varphi_{\scriptscriptstyle{e}}(z)$
	implies $p(z)\prec \psi_\beta(z)$, where $\psi_\beta(z)$ is given by \eqref{qbeta-CGHF-Form}.
	\par
	Now the subordination $p(z)\prec\varphi_{\scriptscriptstyle{Ne}}(z)$ will be attained if  $\psi_\beta(z)\prec\varphi_{\scriptscriptstyle{Ne}}(z)$.\\ 
		{\bf Claim.}
		The necessary and sufficient condition for the subordination $\psi_\beta\prec\varphi_{\scriptscriptstyle{Ne}}$ to hold true is that $\beta\geq\beta_e\approx1.14016$.
		\par 
	{\it Necessity.}
	Let $\psi_\beta(z)\prec\varphi_{\scriptscriptstyle{Ne}}(z)$, $z\in\mathbb{D}$. Then 
	\begin{align}\label{NS-DSI-CHGFN}
		\varphi_{\scriptscriptstyle {Ne}}(-1)<\psi_\beta(-1)<\psi_\beta(1)<\varphi_{\scriptscriptstyle {Ne}}(1).
	\end{align}
	On using \eqref{qbeta-CGHF-Form} and \eqref{Gamma-Function-Rep-CHGF} in \eqref{NS-DSI-CHGFN}, we obtain the two inequalities
	\begin{align*}
		\mu(\beta):=
		\sum_{j=0}^\infty\frac{(-1)^j}{j!\,(1+j\beta)}-
		\frac{1}{3}
		\geq0
	\end{align*}
	and
	\begin{align*}
		\rho(\beta):=
		\frac{5}{3} 
		-\sum_{j=0}^\infty\frac{1}{j!\,(1+j\beta)}
		\geq 0.
	\end{align*}
 We note that:
   \begin{align*}
   	\lim_{\beta\searrow0}\mu(\beta)=\frac{1}{e}-\frac{1}{3}>0, \qquad 
   	\lim_{\beta\nearrow\infty}\mu(\beta)=\frac{2}{3}>0
   \end{align*}
 and
 \begin{align*}
 	\lim_{\beta\searrow0}\rho(\beta)=\frac{5}{3}-e<0, \qquad 
 	\lim_{\beta\nearrow\infty}\rho(\beta)=\frac{2}{3}>0.
 \end{align*}
 Moreover, a computation shows that both $\mu(\beta)$ and $\rho(\beta)$ are strictly increasing functions of $\beta\in(0,\infty)$. Therefore, the inequalities $\mu(\beta)\geq0$ and $\rho(\beta)\geq0$ are true for $\beta\geq\beta_e$, where $\beta_e\approx1.14016$ is the unique root of $\rho(\beta)=0$. 
 See the plots of $\mu(\beta)$ and $\rho(\beta)$ in \Cref{Plot-mu,Plot-rho}.
\par
 {\it Sufficiency.} In order to attain the subordination $\psi_\beta(z)\prec\varphi_{\scriptscriptstyle{Ne}}(z)$, we only need to prove that $\psi_\beta(\mathbb{D})\subset\varphi_{\scriptscriptstyle{Ne}}(\mathbb{D})$ whenever $\beta\geq\beta_e$. Likewise in \Cref{Thrm-LemB-Impl-Neph-GHGF}, the difference of the square of the distances from the point (1, 0) to the points on the boundary curves $\varphi_{\scriptscriptstyle{Ne}}(e^{i\theta})$ and $\psi_\beta(e^{i\theta})$, respectively, is
	\begin{align*}
		d(\theta,\beta):=&d_1(\theta)-d_2(\theta,\beta)\\
		=&\frac{4}{3}\left(\frac{4}{3}-{\cos^2\theta}\right)-
		\left(\sum_{j=0}^{\infty}\frac{\cos(j\theta)}{{j!\,(1+j\beta)}}-1\right)^2
		-\left(\sum_{j=0}^{\infty}\frac{\sin(j\theta)}{{j!\,(1+j\beta)}}\right)^2.		         
	\end{align*}  
Since both the curves $\varphi_{\scriptscriptstyle{Ne}}(e^{i\theta})$ and $\psi_\beta(e^{i\theta})$ are symmetric about the real line, we restrict $\theta$ to $[0,\pi]$. A computation shows that $d(\theta,\beta)\geq0$ for each $\theta\in[0,\pi]$ whenever $\beta\geq\beta_e\approx1.14016$ and $d(\theta,\beta)<0$ for some $\theta$ whenever $\beta<1.14016$. This shows that the region bounded by the curve $\psi_\beta(e^{i\theta})$ lies in the interior of  $\varphi_{\scriptscriptstyle{Ne}}(\mathbb{D})$ whenever $\beta\geq\beta_e$.
Further, the estimate on $\beta$ can not be improved as
\begin{align*}
	d(0,\beta_e)
	=&\frac{4}{9}-
	\left(\sum_{j=0}^{\infty}\frac{1}{{j!\,(1+j\beta_e)}}-1\right)^2\\
	=&\frac{4}{9}-
	\left(\left(\frac{5}{3}-\rho(\beta_e)\right)-1\right)^2\\
	=&\frac{4}{9}-
	\left(\frac{5}{3}-1\right)^2 \qquad\qquad \left(\because \rho(\beta_e)=0\right)\\
	=&0	         
\end{align*}
See \Cref{Figure-CHGFT} for the geometrical interpretation of the sharpness of $\beta_e$.
\end{proof}
\begin{figure}[H]
	\begin{subfigure}[b]{0.3\textwidth}
		\centering
		\includegraphics[height=1.5in,width=1.65in]{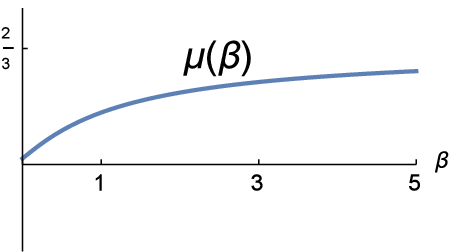}
		\caption{Plot of $\mu(\beta)$, $\beta>0$.}
		\label{Plot-mu}
	\end{subfigure}
	\begin{subfigure}[b]{0.3\textwidth}
		\centering
		\includegraphics[height=1.5in,width=1.65in]{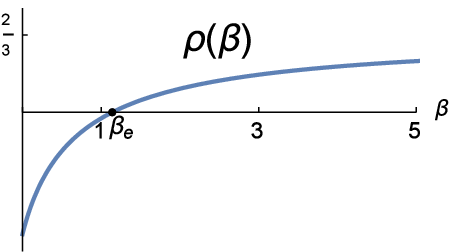}
		\caption{Plot of $\rho(\beta)$, $\beta>0$.}
		\label{Plot-rho}
	\end{subfigure}
	\begin{subfigure}[b]{0.3\textwidth}
		\centering
		\includegraphics[scale=0.92]{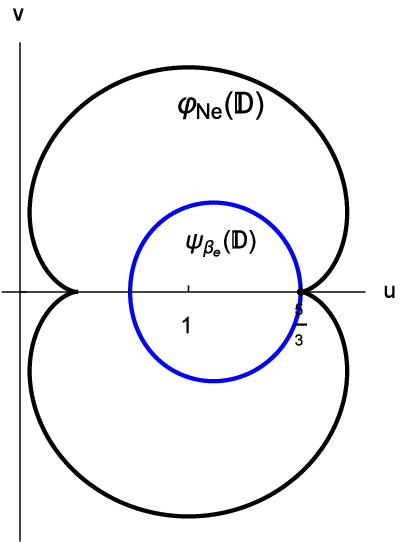}
		\caption{$\psi_{\beta_e}(\mathbb{D})\subset\varphi_{\scriptscriptstyle{Ne}}(\mathbb{D})$}
		\label{Figure-CHGFT}
	\end{subfigure}
	\caption{}
\end{figure}
The following sufficient condition for the nephroid starlikeness of $f\in\mathcal{A}$ is obtained on setting $p(z)={zf'(z)}/{f(z)}$ in \Cref{Thrm-CHGF1}.
\begin{corollary}
	Let $\mathcal{G}(z)$ be defined as in \eqref{Definition-J}.
	If $f\in\mathcal{A}$ satisfies the subordination
	\begin{align*}
		\left(1+\beta\,\mathcal{G}(z)\right)\frac{zf'(z)}{f(z)}\prec \varphi_{\scriptscriptstyle{e}}(z),
	\end{align*}
	then $f\in\mathcal{S}^*_{Ne}$ for $\beta\geq\beta_e\approx1.14016$.
\end{corollary}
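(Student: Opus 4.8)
The plan is to derive this corollary directly from \Cref{Thrm-CHGF1} via the customary substitution $p(z):=zf'(z)/f(z)$. Before invoking the theorem I would verify that such a $p$ is admissible, i.e.\ analytic in $\mathbb{D}$ with $p(0)=1$. Writing $f(z)=z+a_2z^2+\cdots$ gives $zf'(z)/f(z)\to1$ and $\mathcal{G}(z)\to1-1+0=0$ as $z\to0$, so $\left(1+\beta\mathcal{G}(z)\right)zf'(z)/f(z)$ is analytic at the origin with value $1=\varphi_{\scriptscriptstyle{e}}(0)$ there, consistent with the assumed subordination. Moreover, if $f$ vanished at some $z_0\in\mathbb{D}\setminus\{0\}$, a short local computation shows $\left(1+\beta\mathcal{G}(z)\right)zf'(z)/f(z)$ would have a pole (of order two) at $z_0$, which is impossible for a function subordinate to the analytic function $\varphi_{\scriptscriptstyle{e}}$. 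Hence $f$ is zero-free on $\mathbb{D}\setminus\{0\}$ and $p$ is analytic in $\mathbb{D}$ with $p(0)=1$.

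The only genuine computation is the logarithmic-derivative identity $zp'(z)=p(z)\,\mathcal{G}(z)$. From $\log p(z)=\log z+\log f'(z)-\log f(z)$ we get $p'(z)/p(z)=1/z+f''(z)/f'(z)-f'(z)/f(z)$, hence
\[
 zp'(z)=p(z)\bigl(1-\tfrac{zf'(z)}{f(z)}+\tfrac{zf''(z)}{f'(z)}\bigr)=p(z)\,\mathcal{G}(z).
\]
Consequently $p(z)+\beta zp'(z)=\left(1+\beta\mathcal{G}(z)\right)p(z)=\left(1+\beta\mathcal{G}(z)\right)zf'(z)/f(z)$, so the hypothesis of the corollary is exactly the differential subordination $p(z)+\beta zp'(z)\prec\varphi_{\scriptscriptstyle{e}}(z)$ of \Cref{Thrm-CHGF1}.

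Since $\beta\geq\beta_e$, \Cref{Thrm-CHGF1} now gives $p(z)\prec\varphi_{\scriptscriptstyle{Ne}}(z)$, that is $zf'(z)/f(z)\prec\varphi_{\scriptscriptstyle{Ne}}(z)$, which is precisely the defining condition of the class $\mathcal{S}^*_{Ne}$; hence $f\in\mathcal{S}^*_{Ne}$. I do not expect any serious obstacle here: the algebraic identity and the appeal to \Cref{Thrm-CHGF1} are routine, and the only step needing a little care is the admissibility check of the first paragraph --- namely that the subordination hypothesis itself forces $f$ to be non-vanishing on $\mathbb{D}\setminus\{0\}$, so that $p$ is a bona fide analytic function with $p(0)=1$. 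Since the corollary claims no sharpness, nothing beyond the implication in \Cref{Thrm-CHGF1} is required.
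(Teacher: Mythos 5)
Your proposal is correct and follows exactly the route the paper intends: the paper derives this corollary from \Cref{Thrm-CHGF1} by the same substitution $p(z)=zf'(z)/f(z)$ together with the identity $zp'(z)=p(z)\,\mathcal{G}(z)$, giving $p(z)+\beta zp'(z)=\bigl(1+\beta\,\mathcal{G}(z)\bigr)zf'(z)/f(z)$. Your additional check that the subordination hypothesis forces $f$ to be zero-free on $\mathbb{D}\setminus\{0\}$ (so that $p$ is genuinely analytic with $p(0)=1$) is a harmless refinement the paper leaves implicit.
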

\section{Conclusion}
In this work, certain differential subordination-implication problems have been discussed. We have employed a new technique to solve the problem by applying the well-known properties of Gaussian and confluent (or Kummer) hypergeometric functions. In addition, analytic clarification of certain set-inclusions have been supplied in this paper which the earlier authors have claimed to be true without providing any details. All the results proved yield sufficient conditions for the nephroid starlikeness of a normalized analytic function. For a quick overview, we summarize the subordination implications discussed in this paper in \Cref{Table:II}. 
\par 
In view of the recent papers of Srivastava et al.
\cite{HMS-q-Hankel-2019-MPaid,
	HMS-q-Coeff-2019-HMJ,	
	HMS-q-ConicD-2019-Rocky, 
	HMS-q-Janowski-2019-Filomat,  
	HMS-q-Hankel-2021-BSM},
we remark that the works related to the nephroid domain carried out by the authors of this manuscript has several future prospects for $q$-extension.  
\begin{center}
	\begin{longtable}{ |M{4cm}|c|c|M{2.1cm}|M{2.4cm}| }
		\hline
		{\bf Differential Subordination} & {\boldmath{$\mathcal{P}(z)$}} & {\bf Implication} & {\boldmath$\beta$} & {\bf Sharp/ non-sharp \boldmath$\beta$}\\
		\hline 
		\multirow{3}{10em}{$p(z)+\beta{zp'(z)}\prec\mathcal{P}(z)$} & $\sqrt{1+z}$  & \multirow{3}{6em}{$p(z)\prec\varphi_{\scriptscriptstyle{Ne}}(z)$ $=1+z-{z^3}/{3}$} &  $0.158379$ &\multirow{3}{2em}{sharp}\\
		\cline{2-2}  \cline{4-4}    & $1+z$        &      & $1/2$ &\\
		\cline{2-2}  \cline{4-4}    & $e^z$        &      & $1.14016$   &\\
		\hline
		\caption{Subordination implications studied in this paper}
		\label{Table:II}
	\end{longtable}
\end{center}
\vspace{-1.0em}

{\bf Acknowledgment.} This work of the authors were supported by the Project No. CRG/2019/000200/MS of Science and Engineering Research Board, Department of Science and Technology, New Delhi, India.



\begin{thebibliography}{1}
\bibitem{Ahuja-2008-Connections-HGFs-AMC}      
   O. P. Ahuja, Connections between various subclasses of planar harmonic mappings involving hypergeometric functions, Appl. Math. Comput. {\bf 198} (2008), no.~1, 305--316.
	
\bibitem{Ahuja-Ravi-2018-App-Diff-Sub-Stud-Babe-Bolyai}
     O. P. Ahuja, S. Kumar\ and\ V. Ravichandran, Applications of first order differential subordination for functions with positive real part, Stud. Univ. Babe\c{s}-Bolyai Math. {\bf 63} (2018), no.~3, 303--311.
\bibitem{Ali-Jain-Ravi-2012-Radii-LemB-AMC}
     R. M. Ali, N. K. Jain\ and\ V. Ravichandran, Radii of starlikeness associated with the lemniscate of Bernoulli and the left-half plane, Appl. Math. Comput. {\bf 218} (2012), no.~11, 6557--6565.
     

\bibitem{Ali-Ravi-2012-Diff-Sub-LoB-TaiwanJM}
     R. M. Ali, N. E. Cho, V. Ravichandran and S. S. Kumar, Differential subordination for functions associated with the lemniscate of Bernoulli, Taiwanese J. Math. {\bf 16} (2012), no.~3, 1017--1026.

\bibitem{Antonio-Miller-2020-AMP}
    J. A. Antonino\ and\ S. S. Miller, Systems of simultaneous differential inequalities, inclusions and subordinations in the complex plane, Anal. Math. Phys. {\bf 10} (2020), no.~3, Paper No. 32, 13 pp. 

\bibitem{Aktas-LemExpo-Coulomb-2020-SSMH}
   \.{I}. Akta\c{s}, Lemniscate and exponential starlikeness of regular Coulomb wave functions, Studia Sci. Math. Hungar. {\bf 57} (2020), no.~3, 372--384. 


\bibitem{Bohra-Ravi-CHGfs-Bessel-2017-AM}
   N. Bohra\ and\ V. Ravichandran, On confluent hypergeometric functions and generalized Bessel functions, Anal. Math. {\bf 43} (2017), no.~4, 533--545. 

\bibitem{Bohra-Ravi-2019-Diff-Subord-Hacet.J}
   N. Bohra, S. Kumar\ and\ V. Ravichandran, Some special differential subordinations, Hacet. J. Math. Stat. {\bf 48} (2019), no.~4, 1017--1034.

\bibitem{Bulboaca-2005-Diff-Sub-Book}
   T. Bulboac\v{a}, {\it Differential Subordinations and Superordinations}, Recent Results, House of Scientific Book Publ., Cluj-Napoca, 2005.

\bibitem{de-Branges-1985-Proof-BC}
   L. de Branges, A proof of the Bieberbach conjecture, Acta Math. {\bf 154} (1985), no.~1-2, 137--152.

\bibitem{Cho-Ravi-2018-Diff-Sub-Booth-Lem-TJM}
   N. E. Cho, S. Kumar, V. Kumar\ and\ V. Ravichandran, Differential subordination and radius estimates for starlike functions associated with the Booth lemniscate, Turkish J. Math. {\bf 42} (2018), no.~3, 1380--1399.

\bibitem{Cho-2019-Sine-BIMS}
   N. E. Cho, V. Kumar, S. S. Kumar\ and\ V. Ravichandran, Radius problems for starlike functions associated with the sine function, Bull. Iranian Math. Soc. {\bf 45} (2019), no.~1, 213--232.

\bibitem{HMS-DS-Caratheo-2020-JIA}
   N. E. Cho, H. M. Srivastava, E. Analouei Adegani and\ A. Motamednezhad, Criteria for a certain class of the Carath\'{e}odory functions and their applications, J. Inequal. Appl. {\bf 2020}, Paper No. 85, 14 pp.

\bibitem{Ebadian-Bulboaca-Cho-RACSAM-2020}
   A. Ebadian, \ T. Bulboac\v{a}, \  N. E. Cho\ and\ E.A. Adegani, Coefficient bounds and differential subordinations for analytic functions associated with starlike functions, Rev. R. Acad. Cienc. Exactas F\'{\i}s. Nat. Ser. A Mat. RACSAM {\bf 114} (2020), no.~3, Paper No. 128, 19 pp.

\bibitem{Ebadian-Adegani-Bulboaca-2020-JFS}
    A. Ebadian, N. H. Mohammed, E.A. Adegani\ and\ T. Bulboac\v{a}, New results for some generalizations of starlike and convex functions, J. Funct. Spaces {\bf 2020}, Art. ID 7428648, 12 pp.
  
\bibitem{Gandhi-Ravi-2017-Lune}
     S. Gandhi\ and\ V. Ravichandran, Starlike functions associated with a lune, Asian-Eur. J. Math. {\bf 10} (2017), no.~4, 1750064, 12 pp.

\bibitem{Gavris-2020-Slovaca}
    E. Gavri\c{s}, Differential subordinations and Pythagorean means, Math. Slovaca {\bf 70} (2020), no.~5, 1135--1140.

\bibitem{Goel-Siva-2019-Sigmoid-BMMS}
    P. Goel\ and\ S. Sivaprasad Kumar, Certain class of starlike functions associated with modified sigmoid function, Bull. Malays. Math. Sci. Soc. {\bf 43} (2020), no.~1, 957--991.

\bibitem{Kargar-2019-Booth-Lem-A.M.Physics}
   R. Kargar, A. Ebadian\ and\ J. Sok\'{o}\l, On Booth lemniscate and starlike functions, Anal. Math. Phys. {\bf 9} (2019), no.~1, 143--154.

\bibitem{Khatter-Ravi-2019-Lem-Exp-Alpha-RACSAM}  
  K. Khatter, V. Ravichandran\ and\ S. Sivaprasad Kumar, Starlike functions associated with exponential function and the lemniscate of Bernoulli, Rev. R. Acad. Cienc. Exactas F\'{\i}s. Nat. Ser. A Mat. RACSAM {\bf 113} (2019), no.~1, 233--253.

\bibitem{S.Kumar-Goel-2020-RACSAM}
  S. S. Kumar\ and\ P. Goel, Starlike functions and higher order differential subordinations, Rev. R. Acad. Cienc. Exactas F\'{\i}s. Nat. Ser. A Mat. RACSAM {\bf 114} (2020), no.~4, Paper No. 192, 23 pp.

\bibitem{Kumar-Ravi-2013-Suff-Conditions-LoB-JIA}
   S. S. Kumar, V. Kumar, V. Ravichandran\ and\ N. E. Cho, Sufficient conditions for starlike functions associated with the lemniscate of Bernoulli, J. Inequal. Appl. {\bf 2013}, 2013:176, 13 pp.

\bibitem{Kumar-Ravi-2016-Starlike-Associated-Rational-Function}
  S. Kumar\ and\ V. Ravichandran, A subclass of starlike functions associated with a rational function, Southeast Asian Bull. Math. {\bf 40} (2016), no.~2, 199--212.

\bibitem{SushilKumar-Ravi-2018-Sub-Positive-RP-CAOT}
   S. Kumar\ and\ V. Ravichandran, Subordinations for functions with positive real part, Complex Anal. Oper. Theory {\bf 12} (2018), no.~5, 1179--1191.

\bibitem{Kustner-2002-HGF-CMFT}     
    R. K\"{u}stner, Mapping properties of hypergeometric functions and convolutions of starlike or convex functions of order $\alpha$, Comput. Methods Funct. Theory {\bf 2} (2002), no.~2, [On table of contents: 2004], 597--610. 

\bibitem{Kustner-2007-HGF-JMAA}
   R. K\"{u}stner, On the order of starlikeness of the shifted Gauss hypergeometric function, J. Math. Anal. Appl. {\bf 334} (2007), no.~2, 1363--1385. 

\bibitem{Lockwood-Book-of-Curves-2007}
   E. H. Lockwood, {\it A book of curves}, paperback re-issue of the 1963 edition, Cambridge University Press, Cambridge, 2007.

\bibitem{Ma-Minda-1992-A-unified-treatment}
  W. C. Ma\ and\ D. Minda, A unified treatment of some special classes of univalent functions, in {\it Proceedings of the Conference on Complex Analysis} (Tianjin, 1992), 157--169, Conf. Proc. Lecture Notes Anal., I, Int. Press, Cambridge.

\bibitem{Madaan-Ravi-2019-Filomat}
  V. Madaan, A. Kumar\ and\ V. Ravichandran, Starlikeness associated with lemniscate of Bernoulli, Filomat {\bf 33} (2019), no.~7, 1937--1955.

\bibitem{HMS-HGF-2021-Quaest}
   M. Masjed-Jamei\ and\ H. M. Srivastava, Some expansions of functions based upon two sequences of hypergeometric polynomials, Quaest. Math. {\bf 44} (2021), no.~1, 17--36.

\bibitem{Mendiratta-2014-Shifted-Lemn-Bernoulli}
  R. Mendiratta, S. Nagpal\ and\ V. Ravichandran, A subclass of starlike functions associated with left-half of the lemniscate of Bernoulli, Internat. J. Math. {\bf 25} (2014), no.~9, 1450090, 17 pp.

\bibitem{Mendiratta-Ravi-2015-Expo-BMMS}
   R. Mendiratta, S. Nagpal\ and\ V. Ravichandran, On a subclass of strongly starlike functions associated with exponential function, Bull. Malays. Math. Sci. Soc. {\bf 38} (2015), no.~1, 365--386.

\bibitem{Miller-Mocanu-1990-HGFs-PAMS}        
   S. S. Miller\ and\ P. T. Mocanu, Univalence of Gaussian and confluent hypergeometric functions, Proc. Amer. Math. Soc. {\bf 110} (1990), no.~2, 333--342.

\bibitem{Miller-Mocanu-Book-2000-Diff-Sub}
   S. S. Miller\ and\ P. T. Mocanu, {\it Differential subordinations}, Monographs and Textbooks in Pure and Applied Mathematics, 225, Marcel Dekker, Inc., New York, 2000.

\bibitem{Mostafa-2010-HGFs-CMA}        
   A. O. Mostafa, Starlikeness and convexity results for hypergeometric functions, Comput. Math. Appl. {\bf 59} (2010), no.~8, 2821--2826.

\bibitem{NazAdiba-Ravi-2019-Exponential-TJM}        
   A. Naz, S. Nagpal\ and\ V. Ravichandran, Star-likeness associated with the exponential function, Turkish J. Math. {\bf 43} (2019), no.~3, 1353--1371.

\bibitem{Naz-Ravi-2020-MJM}
  A. Naz, S. Nagpal\ and\ V. Ravichandran, Exponential Starlikeness and Convexity of Confluent Hypergeometric, Lommel, and Struve Functions, Mediterr. J. Math. {\bf 17} (2020), no.~6, 204.

\bibitem{Rainville-Special-Functions-Book}
    E. D. Rainville, {\it Special functions}, The Macmillan Co., New York, 1960.
   
\bibitem{Ruscheweyh-Singh-1986-HGFs-JMAA}    
   St. Ruscheweyh\ and\ V. Singh, On the order of starlikeness of hypergeometric functions, J. Math. Anal. Appl. {\bf 113} (1986), no.~1, 1--11.
    
\bibitem{Sharma-Ravi-2016-Cardioid}
  K. Sharma, N. K. Jain\ and\ V. Ravichandran, Starlike functions associated with a cardioid, Afr. Mat. {\bf 27} (2016), no.~5-6, 923--939.

\bibitem{Sharma-Raina-Sokol-2019-Ma-Minda-Crescent-Shaped}
   P. Sharma, R. K. Raina\ and\ J. Sok\'{o}l, Certain Ma-Minda type classes of analytic functions associated with the crescent-shaped region. Analysis and Mathematical Physics (2019), pp.1-17.


\bibitem{Sokol-J.Stankwz-1996-Lem-of-Ber}
  J. Sok\'{o}\l\ and\ J. Stankiewicz, Radius of convexity of some subclasses of strongly starlike functions, Zeszyty Nauk. Politech. Rzeszowskiej Mat. No. 19 (1996), 101--105.


\bibitem{HMS-HGF-2017-Math-Methods-Appl-Sci}
    H. M. Srivastava, R. Agarwal\ and\ S. Jain, Integral transform and fractional derivative formulas involving the extended generalized hypergeometric functions and probability distributions, Math. Methods Appl. Sci. {\bf 40} (2017), no.~1, 255--273.  

\bibitem{HMS-MM-2018-RACSAM}
   H. M. Srivastava, S. Gaboury\ and\ F. Ghanim, Coefficient estimates for a general subclass of analytic and bi-univalent functions of the Ma-Minda type, Rev. R. Acad. Cienc. Exactas F\'{\i}s. Nat. Ser. A Mat. RACSAM {\bf 112} (2018), no.~4, 1157--1168.


\bibitem{HMS-q-Hankel-2019-MPaid}
   H.M. Srivastava, Q.Z. Ahmad, N. Khan, N. Khan, B. Khan, Hankel and Toeplitz determinants for a subclass of $q$-starlike functions associated with a general conic domain, Mathematics 7(2) (2019) 181.
   
\bibitem{HMS-q-Coeff-2019-HMJ}
   H.M. Srivastava, B. Khan, N. Khan, Q.Z. Ahmad, Coefficient inequalities for $q$-starlike functions associated with the Janowski functions, Hokkaido Math. J. 48 (2019) 407–425.
   
\bibitem{HMS-q-ConicD-2019-Rocky}
   H.M. Srivastava, B. Khan, N. Khan, Q.Z. Ahmad, M. Tahir, A generalized conic domain and its applications to certain subclasses of analytic functions, Rocky Mt. J. Math. 49(7) (2019) 2325–2346.
   
\bibitem{HMS-q-Hankel-2021-BSM}
   H. M. Srivastava, B. Khan, N. Khan, M. Tahir, S. Ahmad\ and\ N. Khan, Upper bound of the third Hankel determinant for a subclass of $q$-starlike functions associated with the $q$-exponential function, Bull. Sci. Math. {\bf 167} (2021), 102942, 16 pp.

\bibitem{HMS-MM-2013-JCA}
   H. M. Srivastava, G. Murugusundaramoorthy\ and\ K. Vijaya, Coefficient estimates for some families of bi-Bazilevi\v{c} functions of the Ma-Minda type involving the Hohlov operator, J. Class. Anal. {\bf 2} (2013), no.~2, 167--181.

\bibitem{HMS-HGF-2020-JNCA}
   H. M. Srivastava, M. I. Qureshi\ and\ S. Jabee, Some general series identities and summation theorems for the Gauss hypergeometric function with negative integer numerator and denominator parameters, J. Nonlinear Convex Anal. {\bf 21} (2020), no.~2, 463--478.
   
\bibitem{HMS-q-Janowski-2019-Filomat}
   H.M. Srivastava, M. Tahir, B. Khan, Q.Z. Ahmad, N. Khan, Some general families of $q$-starlike functions associated with the Janowski functions, Filomat 33 (2019) 2613–2626.

\bibitem{HMS-HGF-2019-RACSAM}
   H. M. Srivastava, Y. Vyas\ and\ K. Fatawat, Extensions of the classical theorems for very well-poised hypergeometric functions, Rev. R. Acad. Cienc. Exactas F\'{\i}s. Nat. Ser. A Mat. RACSAM {\bf 113} (2019), no.~2, 367--397.

\bibitem{Swaminathan-2004-HGFs-TamsuiOxf}
   A. Swaminathan, Hypergeometric functions in the parabolic domain, Tamsui Oxf. J. Math. Sci. {\bf 20} (2004), no.~1, 1--16.

\bibitem{Swaminathan-2006-Inclusion-HGFs-JCAA}        
  A. Swaminathan, Inclusion theorems of convolution operators associated with normalized hypergeometric functions, J. Comput. Appl. Math. {\bf 197} (2006), no.~1, 15--28.
  
\bibitem{Swaminathan-2006-HGFs-Conic-Regions}        
  A. Swaminathan, Sufficiency for hypergeometric transforms to be associated with conic regions, Math. Comput. Modelling {\bf 44} (2006), no.~3-4, 276--286.

\bibitem{Swaminathan-2007-IncBeta-ITSF} 
   A. Swaminathan, Convexity of the incomplete beta functions, Integral Transforms Spec. Funct. {\bf 18} (2007), no.~7-8, 521--528.

\bibitem{Swaminathan-2010-HGFs-CMA}        
  A. Swaminathan, Sufficient conditions for hypergeometric functions to be in a certain class of analytic functions, Comput. Math. Appl. {\bf 59} (2010), no.~4, 1578--1583.

\bibitem{Swami-Wani-2020-BKMS}
   A. Swaminathan\ and\ L. A. Wani, Sufficient conditions and radii problems for a starlike class involving a differential inequality, Bull. Korean Math. Soc. {\bf 57} (2020), no.~6, 1409--1426.

\bibitem{Wani-Swami-2020-BabesBolyai}
   L. A. Wani\ and\ S. Anbhu, Inclusion properties of hypergeometric type functions and related integral transforms, Stud. Univ. Babe\c{s}-Bolyai Math. {\bf 65} (2020), no.~2, 211--227.

\bibitem{Wani-Swami-Radius-Problems-Nephroid-RACSAM}
   L. A. Wani\ and\ A. Swaminathan, Radius problems for functions associated with a nephroid domain, Rev. R. Acad. Cienc. Exactas F\'{\i}s. Nat. Ser. A Mat. RACSAM {\bf 114} (2020), no.~4, Paper No. 178, 20 pp.

\bibitem{Wani-Swami-Nephroid-Basic}
   L. A. Wani\ and\ A. Swaminathan, Starlike and convex functions associated with a nephroid domain, Bull. Malays. Math. Sci. Soc. {\bf 44} (2021), no.~1, 79--104.

\bibitem{Yates-1947-Handbook-Curves}
    R. C. Yates, {\it A Handbook on Curves and Their Properties}, J. W. Edwards, Ann Arbor, MI, 1947.
    
\bibitem{Yunus-2018-Limacon}
   Y. Yunus, S. A. Halim\ and\ A. B. Akbarally, Subclass of starlike functions associated with a limacon, in {\it AIP Conference Proceedings 2018 Jun 28 (Vol. 1974, No. 1, p. 030023)}, AIP Publishing.

\end{thebibliography}
\end{document}